\newcommand{\al}{\alpha}
\newcommand{\be}{\beta}
\newcommand{\ga}{\gamma}
\newcommand{\de}{\delta}
\newcommand{\la}{\lambda}
\newcommand{\om}{\omega}
\newcommand{\eps}{\varepsilon}
\newcommand{\iy}{\infty}
\theoremstyle{plain}
\numberwithin{equation}{section}
\newtheorem{thm}{Theorem}[section]
\newtheorem{lem}[thm]{Lemma}
\newtheorem{prop}[thm]{Proposition}
\newtheorem{cor}[thm]{Corollary}
\theoremstyle{definition}
\newtheorem{example}[thm]{Example}
\newtheorem{df}[thm]{Definition}
\theoremstyle{remark}
\newtheorem{remark}[thm]{Remark}
\DeclareMathOperator*{\Res}{Res}
\DeclareMathOperator{\diag}{diag}
\begin{document}

\begin{center}
{\Large\bf Spectral data asymptotics for the higher-order \\[0.2cm] differential  operators with distribution coefficients}
\\[0.5cm]
{\bf Natalia P. Bondarenko}
\end{center}

\vspace{0.5cm}

{\bf Abstract.}  In this paper, the asymptotics of the spectral data (eigenvalues and weight numbers) are obtained for the higher-order differential operators with distribution coefficients and separated boundary conditions.  
Additionally, we consider the case when, for the two boundary value problems, some coefficients of the differential expressions and of the boundary conditions coincide. We estimate the difference of their spectral data in this case. 

Although the asymptotic behaviour of spectral data is well-studied for differential operators with regular (integrable) coefficients, to the best of the author's knowledge, there were no results in this direction for the higher-order differential operators with distribution coefficients (generalized functions) in a general form. The technique of this paper relies on the recently obtained regularization and the Birkhoff-type solutions for differential operators with distribution coefficients. Our results have applications to the theory of inverse spectral problems as well as a separate significance.

\medskip

{\bf Keywords:} higher-order differential operators; distribution coefficients; regularization; eigenvalue asymptotics; weight numbers.

\medskip

{\bf AMS Mathematics Subject Classification (2020):} 34L20 34B09 34B05 34E05 34A55 46F10  

\vspace{1cm}

\section{Introduction} \label{sec:intr}

Consider the differential expression
\begin{align} \nonumber
\ell_{2m+\tau}(y) := & y^{(2m + \tau)} + \sum_{k = 0}^{m-1} (-1)^{i_{2k} + k} (\sigma_{2k}^{(i_{2k})}(x) y^{(k)})^{(k)} \\ \label{defl} + & \sum_{k = 0}^{m + \tau - 2} (-1)^{i_{2k+1} + k + 1} \bigl[(\sigma_{2k+1}^{(i_{2k+1})}(x) y^{(k)})^{(k+1)} + (\sigma_{2k+1}^{(i_{2k+1})}(x) y^{(k+1)})^{(k)}\bigr],
\end{align}
where $m \in \mathbb N$, $\tau = 0, 1$, $n = 2m + \tau$; $(i_{\nu})_{\nu = 0}^{n-2}$ are integers such that $0 \le i_{2k + j} \le m - k - j$, $j = 0, 1$; $(\sigma_{\nu})_{\nu = 0}^{n-2}$ are complex-valued functions satisfying
\begin{equation} \label{condsi}
\begin{array}{ll}\sigma_{\nu} \in L_1(0, 1), \quad & \nu = \overline{0, n-2}, \\
\sigma_{2k+j} \in L_2(0,1) \quad & \text{if}\:\: n = 2m, \: i_{2k+j} = m - k - j, \: j \in \{ 0, 1 \},
\end{array}
\end{equation}
and the derivatives $\sigma_{\nu}^{(i_{\nu})}$ are understood in the sense of distributions.

The paper aims to study spectral data asymptotics for the differential equation
\begin{equation} \label{eqv}
    \ell_n(y) = \la y, \quad x \in (0, 1),
\end{equation}
subject to separated boundary conditions. The results of this paper are applied in \cite{Bond22-rec} to the inverse problem theory and also have a separate significance.

If the functions $\sigma_{\nu}(x)$ are sufficiently smooth, then the differential expression \eqref{defl} can be represented in the form
\begin{equation} \label{horeg}
y^{(n)} + \sum_{k = 0}^{n-2} q_k(x) y^{(k)},
\end{equation}
where $( q_k )_{k = 0}^{n-2}$ are some integrable functions. However, for differential operators with distribution coefficients, it is more convenient to consider the divergent form \eqref{defl} following \cite{MS16, MS19, Bond22}.

For regular differential operators \eqref{horeg}, the standard approach to obtaining eigenvalue asymptotics is described in the classical monograph by Naimark~\cite{Nai68}. In recent years, eigenvalue asymptotics of higher-order differential operators with non-smooth coefficients attract considerable attention (see, e.g., \cite{Akh11, BK12, BK21, Pol22}). 

For differential operators with distribution coefficients, the asymptotic behavior of the eigenvalues has been studied much less. In \cite{Sav01, HM04}, asymptotic formulas have been obtained for the eigenvalues of 
the Sturm-Liouville operators with potentials of class $W_2^{-1}(0,1)$ (i.e. $n = 2$, $i_0 = 1$ in \eqref{defl}). In \cite{MM04, MM12}, the eigenvalue asymptotics were studied for the even-order operator $\frac{d^{2m}}{dx^{2m}}$ perturbed by distribution potential. For the higher-order differential operators with distribution coefficients of the general form \eqref{defl}, to the best of the author's knowledge, the asymptotic behaviour of the eigenvalues has not been investigated before.

Our treatment of the differential expression \eqref{defl} relies on the regularization methods of \cite{MS16, MS19, Vlad17}. Mirzoev and Shkalikov have developed the regularization approach to the differential expression \eqref{defl} with $i_{2k+j} = m - k - j$, $j \in \{ 0, 1 \}$ for an even order $n = 2m$ in \cite{MS16} and for an odd order $n = 2m+1$ in \cite{MS19}. Vladimirov \cite{Vlad17} has obtained an alternative construction, which can be used a wider class of differential operators than the results of \cite{MS16, MS19}. In particular, the approach of \cite{Vlad17} has been applied to the differential expression of form \eqref{defl} in \cite{Bond22}. It is worth mentioning that, in \cite{MS16, MS19, Vlad17}, the coefficients at $y^{(n)}$ and $y^{(n-1)}$ in the differential expression can be arbitrary functions of some classes. In this paper, we confine ourselves to the coefficients $1$ and $0$ at $y^{(n)}$ and $y^{(n-1)}$, respectively, because this case is natural for the inverse problem theory (see \cite{Bond21, Bond22, Bond22-rec}).

Let is briefly describe the regularization of the differential expression \eqref{defl}.
The matrix function $F(x) = [f_{k,j}(x)]_{k, j = 1}^n$ associated with $\ell_n(y)$ is constructed: $F = F_{i_0, i_1, \ldots, i_{n-2}}(\sigma_0, \sigma_1, \ldots, \sigma_{n-2})$. The certain formulas for the associated matrix entries $f_{k,j}(x)$ are provided in Section~\ref{sec:reg}. By using the quasi-derivatives
\begin{equation} \label{quasi}
y^{[0]} := y, \quad y^{[k]} := (y^{[k-1]})' - \sum_{j = 1}^k f_{k,j} y^{[j-1]}, \quad k = \overline{1,n},
\end{equation}
equation \eqref{eqv} is reduced to the equivalent system
\begin{equation} \label{sys}
{\vec y}\,' = (F(x) + \Lambda) \vec y, \quad x \in (0, 1),
\end{equation}
where $\vec y(x) = \mbox{col} ( y^{[0]}(x), y^{[1]}(x), \ldots, y^{[n-1]}(x))$, $\Lambda$ is the $(n \times n)$ matrix
 whose entry at the position $(n,1)$ equals $\la$ and all the other entries equal $0$.

Define the boundary conditions
\begin{equation} \label{bc}
\left.
\begin{array}{l}
    U_s(y) := y^{[p_s]}(0) + \sum\limits_{j = 1}^{p_s} u_{s,j} y^{[j-1]}(0) = 0, \quad s = \overline{1, r}, \\
    U_s(y) := y^{[p_s]}(1) + \sum\limits_{j = 1}^{p_s} u_{s,j} y^{[j-1]}(1) = 0, \quad s = \overline{r+1, n},
\end{array} \right\}
\end{equation}
where $r \in \{ 1, \ldots, n-1 \}$ is fixed, $p_s \in \{ 0, \ldots, n-1 \}$ for $s = \overline{1, n}$, $p_s \ne p_k$ for $1 \le s < k \le r$ and for $r + 1 \le s < k \le n$. It can be easily shown that the spectrum of the boundary value problem~\eqref{eqv},\eqref{bc} is a countable set of eigenvalues (see \cite{Bond21}). In \cite{Bond21}, the spectra of several problems of form \eqref{eqv},\eqref{bc} have been used as the spectral data of the inverse problem. 

The first result of this paper is the following theorem, which describes the asymptotic behaviour of the eigenvalues.

\begin{thm} \label{thm:asympt}
The eigenvalues $\{ \la_l \}_{l \ge 1}$ of the  boundary value problem \eqref{eqv},\eqref{bc} satisfy the relation
\begin{equation} \label{asymptla}
\la_l = (-1)^{n-r} \left( \frac{\pi}{\sin\tfrac{\pi r}{n}} (l + \chi + \eps_l) \right)^n, \quad
l \in \mathbb N, \quad  \{ \eps_l \} \in l_2,
\end{equation}
where the constant $\chi$ depend only on $n$, $r$, $( p_s )_{s = 1}^n$ and do not depend on $(\sigma_{\nu})_{\nu = 0}^{n-2}$ and $u_{s,j}$, $s = \overline{1, n}$, $j = \overline{1, p_s}$.
\end{thm}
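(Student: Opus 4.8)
The plan is to run the classical Birkhoff scheme (as in Naimark~\cite{Nai68}), with the regular fundamental system replaced by the Birkhoff-type solutions available for the regularized system~\eqref{sys} from \cite{Bond22}. First I would introduce the spectral parameter $\rho$ by $\la = \rho^n$, fix the $n$-th roots of unity $\om_1,\dots,\om_n$ of $z^n=1$, and partition the $\rho$-plane into sectors bounded by the Stokes rays on which two of the numbers $\mathrm{Re}\,(\rho\om_k)$ coincide; inside each such sector the ordering of $\mathrm{Re}\,(\rho\om_1),\dots,\mathrm{Re}\,(\rho\om_n)$ is fixed. In a fixed sector $S$ there is a fundamental system $\{y_k(x,\rho)\}_{k=1}^n$ of solutions of~\eqref{eqv} whose quasi-derivatives obey $y_k^{[j]}(x,\rho)=(\rho\om_k)^j e^{\rho\om_k x}(1+O(\rho^{-1}))$ uniformly in $x\in[0,1]$ and $\rho\in S$, $|\rho|\ge\rho^*$, together with a companion system normalized at $x=1$. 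I would record the remainder estimate in the precise form needed, tracking its dependence on $\|\sigma_\nu\|_{L_1}$ (and $\|\sigma_\nu\|_{L_2}$ in the critical case of~\eqref{condsi}), since this is what eventually produces $\{\eps_l\}\in l_2$.

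Next I would form the characteristic function $\Delta(\rho):=\det[U_s(y_k)]_{s,k=1}^n$, whose zeros, counted with multiplicity, are exactly the eigenvalues of~\eqref{eqv},\eqref{bc}. Substituting the boundary conditions~\eqref{bc} and noting that the terms with $u_{s,j}$ contribute only at lower order in $\rho$, one gets $U_s(y_k)=(\rho\om_k)^{p_s}(1+O(\rho^{-1}))$ for $s\le r$ and $U_s(y_k)=(\rho\om_k)^{p_s}e^{\rho\om_k}(1+O(\rho^{-1}))$ for $s>r$; using the two normalizations of the fundamental system one arranges the columns into a "left" and a "right" group, so the matrix is block-triangular up to exponentially small entries. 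Factoring out $\rho^{\sum_s p_s}$ and the dominant exponential, $\Delta(\rho)$ reduces, uniformly on $S$, to an explicit exponential polynomial $\Delta_0(\rho)$ — whose nonzero coefficients are Vandermonde-type determinants of the $\om_k^{p_s}$, hence depend only on $n$, $r$, $(p_s)$ — plus a remainder $\Delta_1(\rho)$ which is $o(1)$, and in fact square-summable along the relevant sequence of points.

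Then I would locate the zeros. On the sectors and Stokes rays other than one distinguished ray, $\Delta_0$ stays bounded away from $0$ for large $|\rho|$, so no eigenvalues accumulate there; on the distinguished ray $\arg\rho=\pi(n-r)/n$ the normalized $\Delta_0$ takes the form $a_0+a_1 e^{i\ga|\rho|}$ with $a_0,a_1\ne0$ and $\ga=|\om_j-\om_{j'}|$ for a pair of roots of unity that are $r$ apart, i.e. $\ga=2\sin\tfrac{\pi r}{n}$ — this is exactly where the split $r\mid(n-r)$ and the constant $\tfrac{\pi}{\sin(\pi r/n)}$ enter. Its zeros are the equispaced points $\rho_l^0=\tfrac{\pi}{\sin(\pi r/n)}(l+\chi)$ with $\chi$ determined by $n,r,(p_s)$ alone. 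A Rouché argument in disks of fixed radius about each $\rho_l^0$ gives, for $l$ large, exactly one zero $\rho_l=\rho_l^0+\eps_l$ of $\Delta$ in each disk, with $|\eps_l|\le C|\Delta_1(\rho_l^0)|/|\Delta_0'(\rho_l^0)|$; since $(\Delta_1(\rho_l^0))_l\in l_2$ (Bessel-type bounds applied to the Fourier-like integrals $\int_0^1\sigma_\nu(x)e^{c\rho x}\,dx$ in the refined Birkhoff expansion, sampled along the arithmetic progression $\rho_l^0$), we obtain $\{\eps_l\}\in l_2$. A global count of the zeros of $\Delta$ inside expanding circles, compared with the count for $\Delta_0$, then shows that these are all the eigenvalues, so the enumeration $\{\la_l\}_{l\ge1}$ with $\la_l=\rho_l^n=(-1)^{n-r}\bigl(\tfrac{\pi}{\sin(\pi r/n)}(l+\chi+\eps_l)\bigr)^n$ is complete, which is~\eqref{asymptla}.

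The step I expect to be the main obstacle is not the location of the zeros — that is routine once $\Delta_0$ is identified — but obtaining the asymptotics of $\Delta(\rho)$ with a remainder sharp enough to yield $\{\eps_l\}\in l_2$ rather than merely $\eps_l\to0$. This needs a Birkhoff-type expansion for the regularized system~\eqref{sys} carried one term beyond the leading order with an explicitly controlled error under only the $L_1/L_2$ hypotheses~\eqref{condsi}, and then careful bookkeeping that the error terms, evaluated along $\rho_l^0$, are square-summable; this is precisely where the full strength of~\eqref{condsi} (in particular the $L_2$-condition in the borderline case $n=2m$, $i_{2k+j}=m-k-j$) is used.
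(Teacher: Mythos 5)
Your overall route coincides with the paper's: the Naimark scheme run on Birkhoff-type solutions for the regularized system, reduction of the characteristic determinant to the exponential polynomial $c_1-c_2\exp(\rho(\om_r-\om_{r+1}))$, a local Rouch\'e argument giving \eqref{smrhol}, and a global Rouch\'e comparison to fix the enumeration. Two points in your sketch, however, would fail as written, and they are exactly where the distributional setting differs from the classical one.

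First, under \eqref{condsi} the Birkhoff remainder is not $O(\rho^{-1})$: Proposition~\ref{prop:y} only gives $y_k^{[j]}=(\rho\om_k)^j e^{\rho\om_k x}(1+\zeta_{jk})$ with $|\zeta_{jk}|\le C(\Upsilon(\rho)+|\rho|^{-1})$, where $\Upsilon(\rho)$ is the supremum \eqref{defUps1} of oscillatory integrals of the $L_2$-matrix $A_0$ and in general only tends to zero (Proposition~\ref{prop:Ups}). Consequently, the $l_2$-property of $\eps_l$ cannot be extracted from a plain Bessel bound on $\int_0^1\sigma_\nu(x)e^{c\rho x}\,dx$ sampled along the arithmetic progression $\rho_l^0$: the error in $D_1$ is controlled by the sup-type quantity $\Upsilon(\rho)$ (a supremum over $s,x$ of weighted integrals of the entries of $A_0$, not a single Fourier coefficient), and it has to be evaluated at the perturbed zeros $\rho_l$ rather than at $\rho_l^0$. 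The paper closes this by writing $\eps_l=\eps(\rho_l)$ with \eqref{defeps}, observing that $\{\rho_l\}$ is non-condensing, and invoking Proposition~\ref{prop:nonc} with $\mu=2$ (using $A_0\in L_2$, i.e. precisely the $L_2$-part of \eqref{condsi}) to conclude $\{\Upsilon(\rho_l)\}\in l_2$; some lemma of this kind is indispensable and is missing from your argument. Second, your ``global count of the zeros of $\Delta$ inside expanding circles'' is proposed for $\det[U_s(y_k)]$, which exists only in a sector for $|\rho|\ge\rho^*$ and is not entire; the count must be run on the entire function $\Delta(\la)=\det[U_s(C_k)]$, linked to $D(\rho)$ by \eqref{relDD} with $\det\mathcal A(\rho)\ne0$ for large $|\rho|$, and compared (via the lower bound \eqref{Deltab}) with the characteristic function $\Delta^0$ of the zero-coefficient problem. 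It is this comparison that shows the numeration starts at an index depending only on $n$, $r$, $(p_s)$, so that $\chi$ in \eqref{asymptla} has the stated independence of $(\sigma_\nu)$ and $(u_{s,j})$.
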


In the inverse problem theory \cite{Bond22-rec}, it is convenient to recover the coefficients $\sigma_{n-2}$, $\sigma_{n-3}$, \ldots, $\sigma_1$, $\sigma_0$ one-by-one. Therefore, the question arises:

\smallskip

\textit{If $(\sigma_{\nu})_{\nu = \nu_0}^{n-2}$ are known, then what can be said about the eigenvalue asymptotics?}

\smallskip

We give a rigorous answer to this question in Theorem~\ref{thm:dif}.
Denote by $\mathcal L$ the boundary value problem \eqref{eqv},\eqref{bc} and by $\tilde {\mathcal L}$ the boundary value problem of the same form but with the coefficients $(\sigma_{\nu})$ and $(u_{s,j})$ replaced by $(\tilde \sigma_{\nu})$ and $(\tilde u_{s,j})$, respectively.
The numbers $n$, $r$, $(i_{\nu})$, and $(p_s)$ for the problems $\mathcal L$ and $\tilde {\mathcal L}$ are the same.
Throughout the paper, if a symbol $\gamma$ denotes an object related to the problem without tilde, then $\tilde \ga$ denotes the similar object related to the problem with tilde, and $\hat \ga = \ga - \tilde \ga$.
Consider the values $\rho_l$ and $\tilde \rho_l$ from the asymptotics \eqref{asymptla} for the problems $\mathcal L$ and $\tilde{\mathcal L}$, respectively.

\begin{thm} \label{thm:dif}
Suppose that $\sigma_{\nu}(x) = \tilde \sigma_{\nu}(x)$ for a.e. $x \in (0, 1)$, $\nu = \overline{\nu_0, n-2}$. Denote
\begin{gather} \label{defd1}
d := n - 1 - \max_{\nu = \overline{0, \nu_0-1}} (\nu + i_{\nu}), \\ \nonumber
 \mathcal N_d := \{ \nu = \overline{0, \nu_0-1} \colon d = n - 1 - (\nu + i_{\nu}) \}, \quad \mathcal N_d^0 := \{ \nu \in \mathcal N_d \colon i_{\nu} = 0 \},
\end{gather}
and assume that $u_{s, p_s-j} = \tilde u_{s, p_s - j}$ for $j = \overline{0, d-2}$, $s = \overline{1, n}$.
Then
$$
\rho_l - \tilde \rho_l = l^{-d} (\hat c + \delta_l), \quad \delta_l = o(1), \quad l \to \infty,
$$
where the constant $\hat c$ depends on the numbers 
\begin{equation} \label{valc}
\int_0^1 \hat\sigma_{\nu}(x) \, dx, \quad \nu \in \mathcal N_d^0, \quad \text{and} \quad \hat u_{s, p_s - d + 1}, \quad s = \overline{1, n}.
\end{equation}

In particular, $\hat c = 0$ if all the numbers \eqref{valc} equal zero. If $\hat \sigma_{\nu} \in L_2(0,1)$, $\nu \in \mathcal N_d$, then $\{ \delta_l \} \in l_2$.
\end{thm}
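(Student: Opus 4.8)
The plan is to derive the estimate from a sufficiently sharp asymptotic expansion of the characteristic function of $\mathcal L$ and to compare it with that of $\tilde{\mathcal L}$. Let $\Delta(\rho)$ be the characteristic function of \eqref{eqv},\eqref{bc} in the spectral variable $\rho$, normalized so that its zero set is exactly $\{\rho_l\}_{l\ge 1}$; it is realized as the determinant $\det[U_s(y_k)]_{s,k=1}^n$ of the boundary functionals \eqref{bc} applied to a Birkhoff-type fundamental system $\{y_k(x,\rho)\}_{k=1}^n$ of \eqref{sys}. I would start from the Birkhoff-type asymptotics of the $y_k$, which, in the sectors of the $\rho$-plane containing the eigenvalues, have the form $y_k^{[j]}(x,\rho) = (\rho\omega_k)^j e^{\rho\omega_k x}\bigl( 1 + \sum_{\mu=1}^{d}\rho^{-\mu}\phi_{k,j,\mu}(x) + r_{k,j}(x,\rho) \bigr)$, where $\omega_1,\dots,\omega_n$ are the distinct $n$-th roots of $(-1)^{n-r}$, the $\phi_{k,j,\mu}$ are obtained from the entries of the associated matrix $F$ (hence from $(\sigma_\nu)$) by finitely many integrations, and $r_{k,j}(x,\rho) = o(\rho^{-d})$ uniformly in $x$ — with $\{ r_{k,j}(x,\rho_l^0)\}_l \in l_2$ uniformly in $x$ whenever the coefficients entering at order $\rho^{-d}$ lie in $L_2$. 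Substituting this into $\Delta$ and collecting powers of $\rho^{-1}$ gives $\Delta(\rho) = \Delta^0(\rho) + \sum_{\mu=1}^{d}\rho^{-\mu}\Delta_\mu(\rho) + o(\rho^{-d})$, where $\Delta^0$ depends only on $n$, $r$, $(p_s)$ (this is the term producing the leading behaviour $l+\chi$ of Theorem~\ref{thm:asympt}, and by the normalization its zeros $\rho_l^0$ are simple with $|(\Delta^0)'(\rho_l^0)|$ bounded away from $0$ and $\infty$), while $\Delta_\mu$ is a sum of exponential monomials in $\rho$ whose coefficients are polynomials in the ``depth-$\le\mu$'' data: $\sigma_\nu$ occurs in $\Delta_\mu$ only when $n-1-(\nu+i_\nu)\le\mu$, and $u_{s,j}$ only when $p_s-j+1\le\mu$.

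Next I would examine $\hat\Delta(\rho):=\Delta(\rho)-\tilde\Delta(\rho)$. Since $\hat\sigma_\nu = 0$ for $\nu\ge\nu_0$ and $\hat u_{s,p_s-j} = 0$ for $j=\overline{0,d-2}$, every datum that can enter $\Delta_\mu$ for $\mu\le d-1$ is common to $\mathcal L$ and $\tilde{\mathcal L}$, so $\hat\Delta(\rho) = \rho^{-d}\hat\Delta_d(\rho) + o(\rho^{-d})$. Moreover $\hat\Delta_d$ is \emph{linear} in the remaining differences: by the definition of $d$, each ``new'' coefficient $\sigma_\nu$ ($\nu<\nu_0$) already contributes at order $\le\rho^{-d}$, with equality only for $\nu\in\mathcal N_d$, and each $\hat u_{s,p_s-d+1}$ at order $\rho^{-d}$, so any monomial containing two or more such factors lands at order $o(\rho^{-d})$ and drops out. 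Hence $\hat\Delta_d$ is a linear combination of the separate contributions of $\hat\sigma_\nu$ ($\nu\in\mathcal N_d$) and of $\hat u_{s,p_s-d+1}$ ($s=\overline{1,n}$). For $\nu\in\mathcal N_d$ with $i_\nu=0$ the entry of $\hat F$ carrying $\hat\sigma_\nu$ is placed so that it multiplies $e^{\rho(\omega_k-\omega_k)x}\equiv 1$, and its contribution to $\hat\Delta_d(\rho)$ is the \emph{non-oscillatory} quantity $\bigl(\int_0^1\hat\sigma_\nu\bigr)\cdot(\text{const})$; for $\nu\in\mathcal N_d$ with $i_\nu\ge 1$ it multiplies $e^{\rho(\omega_k-\omega_{k'})x}$ with $\omega_k\ne\omega_{k'}$, so the contribution is an oscillatory integral $\propto\int_0^1\hat\sigma_\nu(x)\,e^{\rho(\omega_k-\omega_{k'})x}\,dx$, which is $o(1)$ by Riemann--Lebesgue and, along $\rho_l^0$, is $l_2$-summable when $\hat\sigma_\nu\in L_2$ (Bessel's inequality); the $\hat u_{s,p_s-d+1}$ enter through the boundary functionals with constant coefficients. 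Evaluating at the $\rho_l^0$ therefore yields $\hat\Delta(\rho_l^0) = (\rho_l^0)^{-d}\bigl(\hat a + \delta_l'\bigr)$, where $\hat a$ is a linear combination of the numbers \eqref{valc} only, $\delta_l'=o(1)$, and $\{\delta_l'\}\in l_2$ under the stated $L_2$ hypothesis on $\hat\sigma_\nu$, $\nu\in\mathcal N_d$.

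Then I would convert this into the estimate for $\hat\rho_l:=\rho_l-\tilde\rho_l$. The Rouché-type argument underlying Theorem~\ref{thm:asympt} gives $\rho_l,\tilde\rho_l = \rho_l^0 + O(\rho_l^{-1})$ and shows that $\Delta',\tilde\Delta'$ stay comparable to $(\Delta^0)'(\rho_l^0)$ on a fixed disc about $\rho_l^0$, on which also $\hat\Delta'(\rho) = O(\rho^{-d})$. From $0=\Delta(\rho_l)=\tilde\Delta(\rho_l)+\hat\Delta(\rho_l)$, $0=\tilde\Delta(\tilde\rho_l)$, and a first-order Taylor expansion of $\tilde\Delta$ between $\tilde\rho_l$ and $\rho_l$, one gets $\hat\rho_l = -\hat\Delta(\rho_l)\big/\tilde\Delta'(\tilde\rho_l)\,(1+o(1))$. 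Since $\hat\Delta(\rho_l) = \hat\Delta(\rho_l^0) + O(\rho_l^{-d-1}) = (\rho_l^0)^{-d}(\hat a + o(1))$ and $\tilde\Delta'(\tilde\rho_l) = (\Delta^0)'(\rho_l^0)(1+o(1))$, and $\rho_l^0\asymp l$, this yields $\hat\rho_l = l^{-d}(\hat c + \delta_l)$ with $\hat c$ a constant multiple of $\hat a$ — hence depending only on the numbers \eqref{valc} — $\delta_l=o(1)$, and $\{\delta_l\}\in l_2$ when $\hat\sigma_\nu\in L_2$ for $\nu\in\mathcal N_d$; in particular $\hat c=0$ once all the numbers \eqref{valc} vanish, because then $\hat a=0$.

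The hard part is the bookkeeping behind the graded expansion of $\Delta$ in the first two steps: establishing the Birkhoff-type asymptotics to exactly order $\rho^{-d}$ with the $\Delta_\mu$ made explicit enough (i) to read off which $\sigma_\nu$ and $u_{s,j}$ occur in each $\Delta_\mu$, (ii) to separate, at order $\rho^{-d}$, the non-oscillatory contributions ($i_\nu=0$) from the oscillatory ones ($i_\nu\ge 1$), and (iii) to keep the remainders genuinely $o(\rho^{-d})$ — and $l_2$-times-$\rho^{-d}$ under the $L_2$ hypothesis — even though the regularized system \eqref{sys} has only $L_1$ (resp.\ $L_2$) entries, so the asymptotic ``series'' terminates and every discarded term must be estimated directly. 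The combinatorial fact that $\hat\Delta_d$ is linear in the data reduces to the inequality ``$\sigma_\nu$ contributes no earlier than order $\rho^{-(n-1-(\nu+i_\nu))}$'', which follows from the position of $\sigma_\nu$ in the associated matrix $F$ together with the constraints $0\le i_{2k+j}\le m-k-j$; the analogous statement for $u_{s,j}$ is immediate from \eqref{bc} and the Birkhoff asymptotics.
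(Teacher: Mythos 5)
Your overall architecture --- compare the characteristic determinants built from Birkhoff-type fundamental systems, isolate the order-$\rho^{-d}$ term of the difference, then convert it into $\hat\rho_l$ by a first-order Taylor argument with $\dot D$ bounded away from zero --- is the same as the paper's, and your final step essentially reproduces \eqref{sm1}--\eqref{sm3} and \eqref{D1rho}. The genuine gap is in your first two steps. You base everything on the claim that each problem \emph{separately} admits the refined expansion $y_k^{[j]}(x,\rho)=(\rho\om_k)^j e^{\rho\om_k x}\bigl(1+\sum_{\mu=1}^{d}\rho^{-\mu}\phi_{k,j,\mu}(x)+r_{k,j}\bigr)$ with $r_{k,j}=o(\rho^{-d})$, hence $D=D^0+\sum_{\mu=1}^{d}\rho^{-\mu}D_\mu+o(\rho^{-d})$ for $\mathcal L$ and for $\tilde{\mathcal L}$, and only then subtract. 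For the coefficient class \eqref{condsi} this intermediate claim is false already at $\mu=1$: the best available asymptotics for a single problem is \eqref{asympty}--\eqref{defzeta}, with remainder $O(\Upsilon(\rho)+|\rho|^{-1})$, where $\Upsilon(\rho)$ consists of oscillatory integrals of the $L_1/L_2$ entries of $A_0$ --- entries generated precisely by the \emph{common} coefficients $\sigma_\nu$, $\nu\ge\nu_0$, which may occupy low-index diagonals of $F$ (e.g.\ the main diagonal of $F$ contributes off-diagonal $L_2$ entries of $A_0$). By Proposition~\ref{prop:Ups}, $\Upsilon(\rho)\to 0$ but with no rate, so it is not $o(\rho^{-1})$, let alone $o(\rho^{-d})$; the graded expansions you want to subtract simply do not exist, and the cancellation of the common non-smooth data cannot be recovered \emph{after} two separate $o(\rho^{-d})$-expansions.

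The subtraction has to be performed \emph{before} the asymptotic analysis, at the level of the integral equations for the two fundamental systems: this is exactly Theorem~\ref{thm:Birk} and Corollary~\ref{cor:y} of the paper, where $\hat A_k=0$ for $k=\overline{0,d-1}$ gives $\|\hat V_k\|=O(|\rho|^{-d})$, and the diagonal term $\int_0^x\diag(\hat A_d(t))\,dt$ is extracted with remainder $O\bigl(|\rho|^{-d}(\Upsilon(\rho)+\Upsilon_d(\rho)+|\rho|^{-1})\bigr)$ as in \eqref{estEh}, \eqref{estzeta}. Once that replacement is made, the rest of your plan matches the paper: linearity of the order-$\rho^{-d}$ term in the new data, the vanishing of the contributions with $i_\nu>0$ (the paper's $S_\nu=0$ computation, your oscillatory/non-oscillatory dichotomy, controlled by $\Upsilon_d$), the $o(1)$ and $l_2$ statements via Propositions~\ref{prop:Ups}, \ref{prop:nonc} along non-condensing sequences, and the Taylor step. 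Two smaller inaccuracies: $\rho_l-\rho_l^0$ is only $o(1)$ (indeed $l_2$), not $O(\rho_l^{-1})$, so your bound $\hat\Delta(\rho_l)=\hat\Delta(\rho_l^0)+O(\rho_l^{-d-1})$ should read $o(\rho_l^{-d})$ (which still suffices); and the $l_2$ conclusion needs $\hat\sigma_\nu\in L_2$ only for $\nu\in\mathcal N_d$ (together with $A_0\in L_2$ from \eqref{condsi}), evaluated along the non-condensing sequence of the actual zeros rather than via Bessel at the unperturbed points.
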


Theorem~\ref{thm:dif} helps to improve the asymptotics of Theorem~\ref{thm:asympt} for odd $n$ and so leads to the following result.

\begin{cor} \label{cor:odd}
For $n = 2m+1$, the remainder $\eps_l$ in \eqref{asymptla} has the form
\begin{equation} \label{asymptodd}
\eps_l = \frac{\chi_1}{l} + \frac{\eps_{l,1}}{l}, \quad \eps_{l,1} = o(1), \quad l \to \infty,
\end{equation}
and the constant $\chi_1$ depends on $\int\limits_0^1 \sigma_{n-2}(x) \, dx$ and $u_{s, p_s}$, $s = \overline{1,n}$. If $\sigma_{n-2} \in L_2(0,1)$ and $(\sigma_{n-3} \in L_2(0,1), i_{n-3} = 1 \:\: \text{or} \:\: i_{n-3} = 0)$, then $\{ \eps_{l,1} \} \in l_2$.
\end{cor}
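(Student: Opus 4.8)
The plan is to derive the refined asymptotics by applying Theorem~\ref{thm:dif} to a carefully chosen auxiliary problem $\tilde{\mathcal L}$ and then combining the resulting estimate with the leading asymptotics of Theorem~\ref{thm:asympt}. Since $n = 2m+1$ is odd, the only distribution coefficient that contributes at order $l^{-1}$ is $\sigma_{n-2}$ (indeed, for $\nu = n-2$ the admissible indices are $i_{n-2} \in \{0,1\}$, and $\nu + i_{n-2}$ ranges over $\{n-2, n-1\}$, so the quantity $n-1-(\nu+i_{\nu})$ attains its minimum over all $\nu$ precisely at $\nu = n-2$ with value $d = 1$, at least when $i_{n-2} = 1$; when $i_{n-2} = 0$ one gets $d = 1$ as well since $n - 1 - (n-2) = 1$). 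Taking $\nu_0 = n-2$, the hypothesis $\sigma_\nu = \tilde\sigma_\nu$ for $\nu = \overline{\nu_0, n-2}$ reduces to $\sigma_{n-2} = \tilde\sigma_{n-2}$, and with $d = 1$ the condition $u_{s,p_s-j} = \tilde u_{s,p_s-j}$ for $j = \overline{0,d-2}$ is vacuous, while the relevant boundary data in \eqref{valc} are $\hat u_{s,p_s}$, $s = \overline{1,n}$, and $\int_0^1 \hat\sigma_\nu\,dx$ over $\nu \in \mathcal N_d^0$.

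First I would fix a convenient reference problem: let $\tilde{\mathcal L}$ be the problem with $\tilde\sigma_{n-2} := \sigma_{n-2}$, all other $\tilde\sigma_\nu := 0$, and all $\tilde u_{s,j} := 0$ (so $\tilde{\mathcal L}$ has the ``trivial'' lower coefficients). For this problem the quasi-derivatives degenerate to ordinary derivatives in the lower-order part, and one can compute the eigenvalue asymptotics by the classical Birkhoff/Naimark analysis of the equation $y^{(n)} + (\text{term built from }\sigma_{n-2}) \, y = \lambda y$ with the simplified boundary conditions; in particular $\tilde\rho_l = \tfrac{\pi}{\sin(\pi r/n)}(l + \chi) + \tilde\varepsilon_l$ where the correction satisfies $\tilde\varepsilon_l = \chi_1^{(0)}/l + o(1/l)$ with $\chi_1^{(0)}$ an explicit expression in $\int_0^1 \sigma_{n-2}\,dx$ alone (the vanishing $\tilde u_{s,j}$ contribute nothing). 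Then I would apply Theorem~\ref{thm:dif} with this choice: it gives $\rho_l - \tilde\rho_l = l^{-1}(\hat c + \delta_l)$ with $\delta_l = o(1)$, and $\hat c$ depending on $\int_0^1 \hat\sigma_\nu\,dx$ for $\nu \in \mathcal N_d^0$ and on $\hat u_{s,p_s} = u_{s,p_s}$. Adding the two relations yields $\rho_l = \tfrac{\pi}{\sin(\pi r/n)}(l + \chi) + (\chi_1^{(0)} + \hat c)/l + o(1/l)$, which is exactly \eqref{asymptodd} with $\chi_1 = \chi_1^{(0)} + \hat c$; tracing the dependencies shows $\chi_1$ is a function of $\int_0^1 \sigma_{n-2}\,dx$ and of $u_{s,p_s}$, $s = \overline{1,n}$, as claimed. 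For the $l_2$ refinement, when $\sigma_{n-2} \in L_2(0,1)$ the classical analysis upgrades $\tilde\varepsilon_l$ to have an $l_2$ remainder after the $1/l$ term, and when additionally $\hat\sigma_\nu \in L_2(0,1)$ for $\nu \in \mathcal N_d$ (which, under the stated hypotheses on $\sigma_{n-3}$, covers all indices in $\mathcal N_d$ since $\hat\sigma_{n-2} = 0$), Theorem~\ref{thm:dif} gives $\{\delta_l\} \in l_2$; hence $\{\varepsilon_{l,1}\} \in l_2$.

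The one point requiring care — and the main obstacle — is the precise determination of the set $\mathcal N_d$ and of $\mathcal N_d^0$ for odd $n$, and verifying that the $L_2$-hypotheses in the corollary's statement really do force $\hat\sigma_\nu \in L_2(0,1)$ for \emph{every} $\nu \in \mathcal N_d$. Concretely, one must check which pairs $(\nu, i_\nu)$ with $0 \le \nu \le n-3$ satisfy $\nu + i_\nu = n-2$ (these are the members of $\mathcal N_d$ besides possibly $n-2$ itself), identify among them those with $i_\nu = 0$ (namely $\nu = n-2$, which is excluded once we set $\hat\sigma_{n-2}=0$, and potentially none others), and confirm that the remaining candidate is $\nu = n-3$ with $i_{n-3} = 1$ — matching exactly the alternative spelled out in the corollary. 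This bookkeeping, together with checking that $d = 1$ robustly (i.e.\ for both values of $i_{n-2}$), is the only delicate step; everything else is a direct invocation of Theorems~\ref{thm:asympt} and \ref{thm:dif} plus the classical Birkhoff asymptotics for the reference problem $\tilde{\mathcal L}$.
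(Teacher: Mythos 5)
There are two genuine gaps, both tied to your choice of reference problem and of $\nu_0$. First, the bookkeeping of $d$ is off. For odd $n=2m+1$ the constraint $i_{2k+j}\le m-k-j$ forces $i_{n-2}=0$ (your claim $i_{n-2}\in\{0,1\}$, and the arithmetic that $i_{n-2}=1$ would still give $d=1$, are both incorrect). More importantly, with your choice $\nu_0=n-2$ the maximum in \eqref{defd1} runs only over $\nu\le n-3$, so $d=1$ only when $i_{n-3}=1$; if $i_{n-3}=0$, then $\nu+i_\nu\le n-3$ for all $\nu\le n-3$ and hence $d\ge 2$. In that case the hypothesis $u_{s,p_s-j}=\tilde u_{s,p_s-j}$, $j=\overline{0,d-2}$, of Theorem~\ref{thm:dif} is no longer vacuous and fails for your reference problem, since you set $\tilde u_{s,j}=0$ while $u_{s,p_s}$ is arbitrary. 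So for $i_{n-3}=0$ your invocation of Theorem~\ref{thm:dif} is not permitted as stated.

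Second, your reference problem retains $\sigma_{n-2}\in L_1(0,1)$, and you assert that ``classical Birkhoff/Naimark analysis'' gives $\tilde\eps_l=\chi_1^{(0)}/l+o(1/l)$, with an $l_2$ remainder when $\sigma_{n-2}\in L_2(0,1)$. Naimark's refined eigenvalue asymptotics require smoother coefficients; for a merely integrable (or $L_2$) coefficient this refinement is precisely the type of statement being proved in this paper, so this step is unsupported and essentially circular. The paper's proof avoids both difficulties by taking $\tilde{\mathcal L}=\mathcal L^0$ (all $\tilde\sigma_\nu=0$ and $\tilde u_{s,j}=0$) and $\nu_0=n-1$: then $\nu=n-2$ with the forced $i_{n-2}=0$ enters the maximum, so $d=1$ for every admissible choice of $(i_\nu)$; the boundary-coefficient hypothesis is vacuous; the reference zeros are exactly $\frac{\pi}{\sin(\pi r/n)}(l+\chi)$ because $A(x,\rho)\equiv 0$ for $\mathcal L^0$, so no auxiliary refined asymptotics is needed; and $\mathcal N_d^0=\{n-2\}$ with $\mathcal N_d=\{n-2\}$ or $\{n-3,n-2\}$ (according to $i_{n-3}=0$ or $1$) yields exactly the stated dependence of $\chi_1$ on $\int_0^1\sigma_{n-2}\,dx$ and $u_{s,p_s}$, as well as the stated $L_2$ conditions for $\{\eps_{l,1}\}\in l_2$. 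Your argument becomes correct if you replace your reference problem and $\nu_0$ by these choices.
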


In order to prove Theorem~\ref{thm:asympt}, we use the approach of Naimark~\cite{Nai68} and the Birkhoff-type solutions constructed by Savchuk and Shkalikov~\cite{SS20}.
The proof of Theorem~\ref{thm:dif} relies on the special structure of the matrix function $F(x)$ associated with the differential expression~$\ell_n(y)$. We develop the technique of~\cite{SS20} to study the difference of the corresponding Birkhoff-type solutions for the problem $\mathcal L$ and $\tilde{\mathcal L}$. Further, we follow the proof strategy of Theorem~\ref{thm:asympt} to analyze the difference of the eigenvalues.

In addition, we obtain the analogs of Theorems~\ref{thm:asympt} and~\ref{thm:dif} for the weight numbers defined in Section~\ref{sec:weight}. The weight numbers together with the eigenvalues are used as spectral data for recovering higher-order differential operators with distribution coefficients in \cite{Bond22-rec}.

The paper is organized as follows. In Section~\ref{sec:reg}, equation~\eqref{eqv} is transformed to the first-order system~\eqref{sys} and then \eqref{sys} is reduced to a more convenient form for studying solution asymptotics. Section~\ref{sec:birk} is devoted to the Birkhoff-type solutions of equation \eqref{eqv} with the certain asymptotic behavior for large values of the spectral parameter. We formulate the necessary propositions from \cite{SS20} and study the difference of the Birkhoff-type solutions for the problems $\mathcal L$ and $\tilde{\mathcal L}$ satisfying the conditions of Theorem~\ref{thm:dif}. In Section~\ref{sec:eig}, the proofs of Theorems~\ref{thm:asympt},~\ref{thm:dif}, and Corollary~\ref{cor:odd} are provided. In Section~\ref{sec:examp}, the main results on the eigenvalue asymptotics are illustrated by several examples. Section~\ref{sec:weight} contains the definition of the weight numbers and the analogs of Theorems~\ref{thm:asympt},~\ref{thm:dif} for the weight numbers supplied by the proofs.

Throughout the paper, we use the following \textbf{notations}.

\begin{itemize}
    \item The same symbol $C$ denotes various constants independent of $x$, $\rho$, etc.
    \item $I$ denotes the $(n \times n)$ unit matrix.
    \item $\de_{jk}$ is the Kronecker delta.
    \item We use the following vector and matrix norms:
    $$
        \| a \| = \max_i |a_i|, \quad a = [a_i]_{i = 1}^n, \qquad
        \| A \| = \max_{i,j} |a_{ij}|, \quad A = [a_{ij}]_{i,j = 1}^n.
    $$
    \item $\diag\{ d_1, d_2, \ldots, d_n \}$ is the diagonal matrix with the entries $(d_k)_{k=1}^n$ on the main diagonal.
    \item For a matrix $A = [a_{kj}]_{k,j=1}^n$, we denote by $\diag(A)$ the diagonal matrix $\diag\{ a_{11}, a_{22}, \ldots, a_{nn} \}$.
    \item We use the same notation $L_{\mu}(0,1)$, $\mu \in [1, \infty]$, for the space of scalar functions, 
    for the space of vector functions 
    $$
        Y = [y_j]_{j = 1}^n, \quad y_j \in L_{\mu}(0,1), \quad \| Y \|_{L_{\mu}} = \max_j \| y_j \|_{L_{\mu}},
    $$
    and for the space of matrix functions
    $$
        A = [a_{kj}]_{k,j = 1}^n, \quad a_{kj} \in L_{\mu}(0,1), \quad \| A \|_{L_{\mu}} = \max_{k,j} \| a_{kj} \|_{L_{\mu}}.
    $$
\item The notation $\{ \varkappa_l \}$ is used for various sequences of $l_2$.
    \item $\la = \rho^n$, $\dot f(\rho) = \tfrac{d}{d\rho} f(\rho)$.
\end{itemize}

\section{Reduction to first-order systems} \label{sec:reg}

In this section, equation~\eqref{eqv} is reduced to the system~\eqref{sys} and then to the form \eqref{sysw}, which is more convenient for studying the asymptotics of solutions. This section is based on the results of \cite{MS16, Vlad17, SS20, Bond22}.

The associated matrix $F(x)$ defined by the coefficients $(\sigma_{\nu})_{\nu = 0}^{n-2}$ of the differential expression \eqref{defl} as follows.

\begin{df} \label{def:F}
Define the matrix $Q(x) = [q_{\xi,j}(x)]_{\xi,j = 0}^m$ by the following formulas:
\begin{gather*} 
Q(x) := \sum_{\nu = 0}^{n-2} \sigma_{\nu}(x) \chi_{\nu, i_{\nu}}, \quad \chi_{\nu, i} = [\chi_{\nu,i;\xi,j}]_{\xi,j = 0}^m, \\ \label{defchi}
\begin{array}{rl}
\chi_{2k, i; s + k, i - s + k} = & C_i^s, \quad s = \overline{0, i}, \\
\chi_{2k+1, i; s+k, i + 1- s + k} = & C_{i + 1}^s - 2 C_i^{s-1}, \quad s = \overline{0, i+1}, 
\end{array}
\end{gather*}
and all the other entries $\chi_{\nu, i; \xi,j}$ equal zero. Here and below, $C_i^s = \dfrac{i!}{s!(i-s)!}$ are the binomial coefficients, $C_i^{-1} := 0$. Then, by using the elements $q_{\xi,j}$, define the elements of the matrix function $F(x) = [f_{k,j}]_{k,j = 1}^n$ as follows:
\begin{align*}
    & n = 2m \colon \quad \begin{cases}
                            f_{m,j} := (-1)^{m+1} q_{j-1,m}, \quad j = \overline{1, m}, \\
                            f_{k,m+1} := (-1)^{k+1} q_{m,2m-k}, \quad k = \overline{m+1, 2m}, \\
                            f_{k,j} := (-1)^{k+1} q_{j-1,2m-k} + (-1)^{m+k} q_{j-1,m} q_{m,2m-k}, \quad k = \overline{m+1,2m}, \, j = \overline{1,m},
                        \end{cases} \\ 
    & n = 2m+1 \colon \quad f_{k,j} := (-1)^k q_{j-1, 2m+1-k},\, k = \overline{m+1, 2m+1}, \, j = \overline{1, m+1}.        
\end{align*}
The other elements are defined as $f_{k,j} = \de_{k+1, j}$. 
\end{df}

Definition~\ref{def:F} together with the condition \eqref{condsi} imply
\begin{equation} \label{classf}
f_{k,j} \in L_1(0,1), \quad f_{k,k} \in L_2(0,1), \quad 1 \le j \le k \le n, \quad \mbox{trace}(F(x)) = 0.
\end{equation}

Suppose that $y \in W_{2,loc}^m(0, 1)$ if for some indices $\nu \in \{ 0, \ldots, n-2 \}$ the condition \eqref{condsi} implies $\sigma_\nu \in L_2(0,1)$ and $y \in W_{1,loc}^m(0,1)$ otherwise. Then $\ell_n(y)$ is correctly defined in the space $\mathfrak D'$ of the linear functionals (generalized functions) on $\mathfrak D = C^{\iy}_0(0,1)$ (see \cite[Lemma~2.1]{Bond22}). If $y \in \mathcal D_F$,
$$
\mathcal D_F := \{ y \colon y^{[k]} \in AC_{loc}(0,1), \: k = \overline{0,n-1} \},
$$
then $\ell_n(y)$ is a regular function and $\ell_n(y) = y^{[n]}$ (see \cite[Theorem~2.2]{Bond22}). Then, instead of equation \eqref{eqv}, one can consider the system \eqref{sys}. Indeed, in view of \eqref{quasi}, the first $(n-1)$ rows of \eqref{sys} coincide with the definition of the quasi-derivatives $y^{[k]}$, $k = \overline{1, n-1}$, and the last row is $y^{[n]} = \la y$. Below, we say that $y$ is a \textit{solution} of equation \eqref{eqv} if $y \in \mathcal D_F$ and the vector function $\vec y(x) = \mbox{col} ( y^{[0]}(x), y^{[1]}(x), \ldots, y^{[n-1]}(x))$ satisfies \eqref{sys}.

Let $\la = \rho^n$.
The change of variables $\vec y(x) = \diag \{ 1, \rho, \ldots, \rho^{n-1} \} u(x)$ transforms the system \eqref{sys} into
\begin{equation} \label{sysu}
u'(x) = F(x, \rho) u, \quad x \in (0, 1),
\end{equation}
where
\begin{gather*}
F(x, \rho) = \rho F_{-1} + F_0(x) + \sum_{k = 1}^{n-1} \rho^{-k} F_k(x), \\
F_{-1} = \begin{bmatrix}
        0 & 1 & 0 & \dots & 0 & 0 \\
        0 & 0 & 1 & \dots & 0 & 0 \\
        \hdotsfor{6} \\
        0 & 0 & 0 & \dots & 1 & 0 \\
        0 & 0 & 0 & \dots & 0 & 1 \\
        1 & 0 & 0 & \dots & 0 & 0 
     \end{bmatrix},
\end{gather*}
and the matrix functions $F_k(x)$ are formed by the corresponding lower diagonals of $F(x)$. The relations \eqref{classf} imply $F_0 \in L_2(0, 1)$, $F_k \in L_1(0, 1)$, $k = \overline{1,n-1}$.

Following the standard ideas described in the book of Naimark \cite{Nai68}, we consider the partition of the $\rho$-plane into the sectors
\begin{equation} \label{defGa}
\Gamma_{\kappa} = \left\{ \rho \colon \frac{\pi(\kappa-1)}{n} < \arg \rho < \frac{\pi \kappa}{n} \right\}, \quad \kappa = \overline{1, 2n}.
\end{equation}
Fix a sector $\Gamma_{\kappa}$. Denote by $\{ \omega_k \}_{k = 1}^n$ the roots  of the equation $\omega^n = 1$ numbered so that
\begin{equation} \label{order}
\mbox{Re} \, (\rho \om_1) < \mbox{Re} \, (\rho \om_2) < \dots < \mbox{Re} \, (\rho \om_n), \quad \rho \in \Gamma_{\kappa}.
\end{equation}
We also define the extended sector (see Fig.~\ref{img:sectors}):
\begin{equation} \label{defsec}
\Gamma_{\kappa, h} := \left\{ \rho \in \mathbb C \colon \rho + h \exp\bigl( \tfrac{i \pi (\kappa - 1/2)}{n}\bigr)  \in \Gamma_{\kappa}\right\}, \quad h > 0.
\end{equation}

\begin{figure}[h!]
\centering
\begin{tikzpicture}
\draw[dotted] (-1, 0) edge (4, 0);
\draw[dotted] (0, -1) edge (0, 4);
\draw (0, 0) edge (2, 4);
\draw (0, 0) edge (4, 2);
\draw (2, 2) node{$\Gamma_{\kappa}$};
\draw (-1.2, -1.2) edge (1.8, 4.8);
\draw (-1.2, -1.2) edge (4.8, 1.8);
\draw (-0.2, -0.3) node{$\Gamma_{\kappa,h}$};
\end{tikzpicture}
\caption{Sectors}
\label{img:sectors}
\end{figure}

Put $B := \diag \{ \om_1, \om_2, \ldots, \om_n \}$, $\Omega := [\om_k^{j-1}]_{j,k = 1}^n$. Obviously, $\Omega^{-1} F_{-1} \Omega = B$. By changing the variables $w(x) := \Omega^{-1} u(x)$, we reduce the system \eqref{sysu} to the form
\begin{equation} \label{sysw}
w' = \rho B w + A(x, \rho) w, \quad x \in (0, 1),
\end{equation}
where
\begin{gather} \label{defA}
A(x, \rho) = A_0(x) + \sum_{k =1}^{n-1} \rho^{-k} A_k(x), \\ \nonumber
A_k(x) = \Omega^{-1} F_k(x) \Omega,\quad k = \overline{0,n-1}.
\end{gather}

Thus, instead of equation \eqref{eqv}, one can consider the system \eqref{sysw}. It follows from \eqref{classf} and \eqref{defA} that $A_0 \in L_2(0,1)$, $A_k \in L_1(0,1)$, $k = \overline{1,n-1}$, $\diag(A_0(x)) \equiv 0$.

\section{Birkhoff-type solutions} \label{sec:birk}

In this section, we study the Birkhoff-type solutions with certain asymptotic behavior as $|\rho| \to \infty$ for the system~\eqref{sysw} and for equation~\eqref{eqv}. 
The Birkhoff-type fundamental systems of solutions (FSS) for the first-order systems which generalize \eqref{sysw} have been constructed in \cite{Rykh99, SS20}. In this paper, we use the approach of Savchuk and Shkalikov \cite{SS20}. First, we provide the necessary notations and results of \cite{SS20} specified for the system \eqref{sysw}. Second, we consider the problems $\mathcal L$ and $\tilde{\mathcal L}$ satisfying the conditions of Theorem~\ref{thm:dif} and investigate the difference of the corresponding Birkhoff-type solutions. The main results for the latter case are formulated in Theorem~\ref{thm:Birk} and Corollary~\ref{cor:y}.

Suppose that $A(x, \rho)$ is an arbitrary matrix function of form \eqref{defA}, where $A_k \in L_1(0, 1)$, $k = \overline{0,n-1}$, $\diag(A_0(x)) \equiv 0$, $\Gamma_{\kappa, h}$ is a fixed sector of form \eqref{defsec},
$\{ \om_j \}_{j = 1}^n$ are the roots of the equation $\om^n = 1$ numbered in the order \eqref{order}.

Denote the elements of the matrices $A_k(x)$ and $A(x, \rho)$ by $a_{k,jl}$ and $v_{jl}(x, \rho) = a_{0,jl}(x) + r_{jl}(x, \rho)$, $j,l = \overline{1, n}$, respectively. Put
\begin{align} \label{defvjkl}
    v_{jkl}(s, x, \rho) & := (\pm)_{jk} (\pm)_{lk} \int a_{0,jl}(t) \exp(\rho[(\om_l - \om_k)(t-s) + (\om_j - \om_k)(x-t)])\, dt, \\ \label{defrhojkl}
    \varrho_{jkl}(s, x, \rho) & := (\pm)_{jk} (\pm)_{lk} \int r_{jl}(t, \rho) \exp(\rho[(\om_l - \om_k)(t-s) + (\om_j - \om_k)(x-t)])\, dt,
\end{align}
where $(\pm)_{jk} = \left\{\begin{array}{ll} -1, \quad & j < k \\ 1, \quad & j \ge k \end{array}\right.$, the integration is taken over the intervals
$$
\left\{ \begin{array}{ll}
(x, s), \quad & \text{if} \:\: j,l < k, \\
(\max\{ x, s \}, 1), \quad & \text{if} \:\: j < k \le l, \\
(0, \min \{ x, s \}), \quad & \text{if} \:\: l < k \le j, \\
(s, x), \quad & \text{if} \:\: k \le j,l, \\
\end{array} \right.
$$
and the integrals are assumed to be zero if the upper limit is less than the lower one. In view of \eqref{order},
the exponents in \eqref{defvjkl} and \eqref{defrhojkl} are bounded:
$$
|\exp(\rho[(\om_l - \om_k)(t-s) + (\om_j - \om_k)(x-t)])| \le C, \quad \rho \in \overline{\Gamma}_{\kappa, h}.
$$

Introduce the region $\mathcal G := \{ \rho \in \Gamma_{\kappa,h} \colon |\rho| \ge \rho^* \}$ for some $\rho^* > 0$.
In view of \eqref{defA}, we have 
$$
\| r_{jl}(., \rho) \|_{L_1} \le C |\rho|^{-1}, \quad \rho \in \overline{\mathcal G},
$$
and so
\begin{equation} \label{estrho}
\max_{j,k,l,s,x} |\varrho(s, x, \rho)| \le C |\rho|^{-1}.
\end{equation}

Denote
\begin{equation} \label{defUps1}
\Upsilon(\rho) := \max_{j,k,l,s,x} |v_{jkl}(s, x, \rho)|.
\end{equation}

\begin{prop}[\cite{SS20}] \label{prop:Birk}
For any fixed sector $\Gamma_{\kappa,h}$ and some $\rho^* > 0$, the system \eqref{sysw} has a fundamental solution matrix $w(x, \rho)$ of form
\begin{equation} \label{asymptw}
    w(x, \rho) = (I + \mathcal E(x, \rho)) \exp(\rho B x), 
\end{equation}
where $\mathcal E(x, \rho)$ is continuous for $x \in [0, 1]$, $\rho \in \overline{\mathcal G}$,
analytic in $\rho$ for each fixed $x \in [0, 1]$, $\rho \in \mathcal G$, and
\begin{equation} \label{estE}
\max_x \| \mathcal E(x, \rho) \| \le C(\Upsilon(\rho) + |\rho|^{-1}), \quad \rho \in \overline{\mathcal G}.
\end{equation}
\end{prop}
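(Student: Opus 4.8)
The plan is to look for the fundamental matrix in the prescribed form \eqref{asymptw}, to convert the requirement that it solve \eqref{sysw} into a family of scalar Volterra-type integral equations for the entries of $\mathcal E(x,\rho)$, and then to solve that family by successive approximations in $C\bigl([0,1];\mathbb C^{n\times n}\bigr)$, exploiting the $|\rho|^{-1}$-smallness of the lower-order part of $A(x,\rho)$ and the oscillatory smallness of the $A_0$-part encoded in $\Upsilon(\rho)$.

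First I would substitute $w(x,\rho) = (I+\mathcal E(x,\rho))\exp(\rho Bx)$ into \eqref{sysw}. Since $B = \diag\{\om_1,\dots,\om_n\}$ is diagonal, a direct computation gives the matrix equation $\mathcal E' = \rho(B\mathcal E - \mathcal E B) + A(x,\rho)(I+\mathcal E)$, which, writing $\mathcal E = [\varepsilon_{jk}]$, becomes the family of scalar linear equations $\varepsilon_{jk}' - \rho(\om_j-\om_k)\varepsilon_{jk} = \bigl(A(x,\rho)(I+\mathcal E)\bigr)_{jk}$, $j,k = \overline{1,n}$. Each of these I would integrate, choosing the base point dictated by the ordering \eqref{order}: for the $(j,k)$-entry one integrates from $0$ when $j \ge k$ (so that $\mathrm{Re}\,\rho(\om_j-\om_k) \ge 0$ makes $|\exp(\rho(\om_j-\om_k)(x-t))| \le 1$ for $t \le x$) and from $1$ when $j < k$. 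This turns the problem into a fixed-point equation $\mathcal E = \Phi(\rho) + \mathcal K(\rho)\mathcal E$ on the Banach space $C\bigl([0,1];\mathbb C^{n\times n}\bigr)$ with the norm $\max_x\|\cdot\|$, where $\mathcal K(\rho)$ is a bounded integral operator with the above exponential kernels and $\Phi(\rho)$ collects the terms coming from the $I$ in $I+\mathcal E$.

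Next I would split $A = A_0 + \sum_{k=1}^{n-1}\rho^{-k}A_k$ as in \eqref{defA}, inducing $\mathcal K = \mathcal K_0 + \mathcal K_1$ and $\Phi = \Phi_0 + \Phi_1$. By \eqref{estrho} and $\|A_k\|_{L_1}\le C$ for $k\ge1$, one has $\|\mathcal K_1\|,\|\Phi_1\| = O(|\rho|^{-1})$ on $\overline{\mathcal G}$. The operator $\mathcal K_0$ is only bounded (since $A_0\in L_1$, not small), so it does not give a contraction by itself; the device is to iterate it once more. Interchanging the order of integration in $\mathcal K_0^2$ (and, similarly, extracting the oscillation from $\Phi_0$) produces precisely the double oscillatory integrals $v_{jkl}(s,x,\rho)$ of \eqref{defvjkl} — the four cases for the integration interval in their definition being exactly the ranges that appear when the nested integrals are exchanged while every exponent is kept nonpositive on $\overline{\Gamma}_{\kappa,h}$ (as recorded after \eqref{defrhojkl}). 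Because the $t$-frequency of $v_{jkl}$ equals $\rho(\om_l-\om_j)$, each $v_{jkl}$ with $l\ne j$ tends to $0$ as $|\rho|\to\infty$ by the Riemann–Lebesgue lemma, while $v_{jkj}\equiv0$ since $\diag(A_0)\equiv0$; hence $\Upsilon(\rho)\to0$. Combining, $\|\mathcal K_0^2\| + \|\mathcal K_1\| \le C(\Upsilon(\rho)+|\rho|^{-1})$ and $\|\Phi\|\le C(\Upsilon(\rho)+|\rho|^{-1})$, so for $|\rho|\ge\rho^*$ with $\rho^*$ large the rewritten equation $\mathcal E = (I+\mathcal K_0)\Phi + (\mathcal K_0^2 + \mathcal K_0\mathcal K_1 + \mathcal K_1)\mathcal E$ is uniquely solvable by a Neumann series, and its solution obeys $\max_x\|\mathcal E(x,\rho)\| \le C\,\|(I+\mathcal K_0)\Phi\| \le C(\Upsilon(\rho)+|\rho|^{-1})$, which is \eqref{estE}. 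Continuity of $\mathcal E$ in $x$ on $[0,1]$ and analyticity in $\rho$ on $\mathcal G$ follow from the corresponding properties of the exponential kernels and of $A(x,\rho)$, together with the uniform convergence of the series on $\overline{\mathcal G}$.

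The main obstacle is the bookkeeping in the previous paragraph: writing out $\mathcal K_0^2$ and the relevant part of $\Phi$ explicitly, carrying out the change of order of integration, and verifying that the resulting inner $t$-integrals coincide with the $v_{jkl}$ with precisely the integration limits listed after \eqref{defrhojkl}, so that they are simultaneously bounded by $\Upsilon(\rho)$ with an estimate uniform in $j,k,l,s$ and in $x$. This combinatorial–analytic core is what makes the Birkhoff construction nontrivial for merely $L_1$ coefficients; once it is in place, the contraction and regularity parts are routine.
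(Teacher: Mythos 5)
Your construction is essentially the argument of Savchuk--Shkalikov that this statement is quoted from: the paper gives no proof of Proposition~\ref{prop:Birk} (it cites \cite{SS20}), but the same machinery — the substitution $w=z\exp(\rho Bx)$, the shifted Volterra equations \eqref{intz} with base points \eqref{defb}, the splitting of $A(x,\rho)$ into $A_0$ plus an $O(|\rho|^{-1})$ part, one extra iteration so that the squared integral operator is controlled by $\Upsilon(\rho)+|\rho|^{-1}$, and a Neumann series for $|\rho|\ge\rho^*$ — is exactly what appears in the paper's proof of Theorem~\ref{thm:Birk}, where the key bounds \eqref{estzk}--\eqref{estVk2} are taken from \cite{SS20}. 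So your route coincides with the intended one, including the identification of the inner $t$-integrals with the quantities $v_{jkl}$ bounding $\Phi_0$ and $\mathcal K_0^2$ by $\Upsilon(\rho)$, and the use of Proposition~\ref{prop:Ups} only to fix $\rho^*$.

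One detail in your base-point justification is backwards and should be repaired, since it is precisely the point that makes the Volterra kernels uniformly bounded in $\rho$. With the numbering \eqref{order}, for $j\ge k$ one has $\mathrm{Re}[\rho(\om_j-\om_k)]\ge 0$, so for $t\le x$ the factor $|\exp(\rho(\om_j-\om_k)(x-t))|=\exp\bigl(\mathrm{Re}[\rho(\om_j-\om_k)]\,(x-t)\bigr)$ is $\ge 1$ and in general exponentially large, not $\le 1$ as you claim; boundedness requires integrating from $0$ exactly when $\mathrm{Re}[\rho(\om_j-\om_k)]\le 0$ and from $1$ otherwise. So you must either swap your base points relative to \eqref{order}, or keep the base points \eqref{defb} and number the $\om_k$ in the opposite (decreasing) order of $\mathrm{Re}(\rho\om_k)$ — note that the interval table in \eqref{defvjkl} and the boundedness claim stated after \eqref{defrhojkl} are consistent precisely with the latter pairing, so the conventions have to be matched consistently rather than mixed. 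This is a bookkeeping fix, not a conceptual gap: once the sign of $\mathrm{Re}[\rho(\om_j-\om_k)]$ and the base point are paired correctly, your contraction argument, the estimate \eqref{estE}, and the continuity/analyticity statements go through as you describe (for full rigor, the uniformity in $s,x$ and in $\rho$ of the decay $\Upsilon(\rho)\to0$ needs the usual $L_1$-approximation argument behind Proposition~\ref{prop:Ups}, which you may simply invoke as the paper does).
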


\begin{prop}[\cite{SS20}] \label{prop:Ups}
$\Upsilon(\rho) \to 0$ as $|\rho| \to \iy$, $\rho \in \Gamma_{\kappa,h}$.
\end{prop}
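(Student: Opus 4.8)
The plan is to show that each entry $v_{jkl}(s,x,\rho)$ tends to $0$ as $|\rho|\to\infty$ \emph{uniformly} in $s,x\in[0,1]$; since there are only finitely many index triples $(j,k,l)$, this yields $\Upsilon(\rho)\to 0$. First, if $j=l$ the integrand in \eqref{defvjkl} contains the factor $a_{0,jj}(t)$, which vanishes identically because $\diag(A_0)\equiv 0$; hence $v_{jkl}\equiv 0$ in that case, and it remains to treat $j\ne l$. For such indices I would expand the exponent,
$$
(\om_l-\om_k)(t-s)+(\om_j-\om_k)(x-t)=(\om_l-\om_j)t+(\om_k-\om_l)s+(\om_j-\om_k)x,
$$
so that, with $\eta:=\rho(\om_l-\om_j)$ and $c:=\rho[(\om_k-\om_l)s+(\om_j-\om_k)x]$ (constant in $t$), one has $v_{jkl}(s,x,\rho)=\pm\int_I a_{0,jl}(t)\,e^{\eta t+c}\,dt$ over the corresponding interval $I=I(s,x)\subseteq[0,1]$. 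The bound recorded right before the statement means precisely that $|e^{\eta t+c}|\le C$ for $t\in I$ and $\rho\in\overline{\Gamma}_{\kappa,h}$, with $C$ independent of $j,k,l,s,x,\rho$; and since $\om_j,\om_l$ are distinct $n$-th roots of unity, $|\eta|\ge c_0|\rho|$ with $c_0:=\min_{j\ne l}|\om_j-\om_l|>0$.

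Next I would use a density argument. Fix $\eps>0$. For each of the finitely many pairs $(j,l)$ with $j\ne l$ choose $g_{jl}\in C^1[0,1]$ with $\|a_{0,jl}-g_{jl}\|_{L_1(0,1)}<\eps$. Because $|e^{\eta t+c}|\le C$ on $I$, replacing $a_{0,jl}$ by $g_{jl}$ changes $v_{jkl}$ by at most $C\|a_{0,jl}-g_{jl}\|_{L_1}<C\eps$, uniformly in $s,x,\rho$. Thus it suffices to control $v_{jkl}^{\eps}(s,x,\rho):=\pm\int_I g_{jl}(t)\,e^{\eta t+c}\,dt$ for the smooth approximant.

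For the smooth approximant I would integrate by parts over $I=[a,b]$,
$$
\int_a^b g_{jl}(t)\,e^{\eta t}\,dt=\frac{g_{jl}(b)e^{\eta b}-g_{jl}(a)e^{\eta a}}{\eta}-\frac{1}{\eta}\int_a^b g_{jl}'(t)\,e^{\eta t}\,dt,
$$
multiply by $e^c$, and use $|e^{\eta t+c}|\le C$ for $t\in[a,b]$ together with $|\eta|\ge c_0|\rho|$ to obtain
$$
|v_{jkl}^{\eps}(s,x,\rho)|\le\frac{C}{c_0|\rho|}\bigl(2\|g_{jl}\|_{C[0,1]}+\|g_{jl}'\|_{L_1(0,1)}\bigr)=\frac{C_{\eps}}{|\rho|},
$$
with $C_{\eps}$ independent of $s,x,\rho$ (and of $k$). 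Combining the two estimates gives $\Upsilon(\rho)\le C\eps+C_{\eps}|\rho|^{-1}$ on $\overline{\mathcal G}$, hence $\limsup_{|\rho|\to\infty}\Upsilon(\rho)\le C\eps$, and letting $\eps\to0$ finishes the proof. The only point needing care is that every estimate be uniform with respect to the moving integration limits $I(s,x)$ and the indices $(j,k,l)$; this is delivered exactly by the uniform exponential bound stated before the proposition and by the finiteness of the index set, so no essential obstacle arises.
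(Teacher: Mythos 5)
Your argument is correct, and it is worth noting that the paper itself offers no proof of this statement: Proposition~\ref{prop:Ups} is quoted from \cite{SS20}, so you are supplying a self-contained justification where the paper relies on a citation. What you give is the standard ``Riemann--Lebesgue with parameters'' argument, and it goes through: the case $j=l$ is killed by $\diag(A_0)\equiv 0$; for $j\ne l$ the exponent splits as $(\om_l-\om_j)t+(\om_k-\om_l)s+(\om_j-\om_k)x$ (your algebra checks out), so the $t$-frequency $\eta=\rho(\om_l-\om_j)$ satisfies $|\eta|\ge c_0|\rho|$ with $c_0=\min_{j\ne l}|\om_j-\om_l|>0$; the $L_1$-approximation by $C^1$ functions costs $C\eps$ uniformly in $(s,x,\rho)$ thanks to the exponential bound stated before the proposition; and the integration by parts gives $C_\eps|\rho|^{-1}$ uniformly in the moving endpoints, since $\|g_{jl}'\|_{L_1(I)}\le\|g_{jl}'\|_{L_1(0,1)}$ and the boundary terms are controlled by the same exponential bound (which extends to the closed interval by continuity). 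With finitely many triples $(j,k,l)$ this yields $\Upsilon(\rho)\le C\eps+C_\eps|\rho|^{-1}$ and hence the claim. This is essentially the argument behind the cited result in \cite{SS20}; your version even gives the slightly stronger quantitative form $\limsup_{|\rho|\to\infty}\Upsilon(\rho)\le C\eps$ for every $\eps$, from which the proposition follows, and it correctly isolates the only delicate point, namely uniformity with respect to the variable integration limits $I(s,x)$, which the uniform exponential bound resolves.
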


We call a sequence $\{ \rho_k \}_{k = 1}^{\iy}$ \textit{non-condensing} if 
$$
\beta := \sum(N(t + 1) - N(t)) < \iy, \quad N(t) := \#\{ k \in \mathbb N \colon |\rho_k| \le t \}.
$$

\begin{prop}[\cite{Sav-dis}] \label{prop:nonc}
Suppose that $A_0 \in L_{\mu}(0, 1)$, $\mu \in (1,2]$, and $\{ \rho_k \}_{k = 1}^{\iy}$ is a non-condensing sequence in $\mathcal G$. Then the sequence $\{ \Upsilon(\rho_k) \}_{k = 1}^{\iy}$ belongs to $l_{\mu'}$, $\frac{1}{\mu} + \frac{1}{\mu'} = 1$, and
$$
\| \{ \Upsilon(\rho_k) \} \|_{l_{\mu'}} \le C \| A_0 \|_{L_{\mu}},
$$
where the constant $C$ depends only on $h$, $\rho^*$, and $\beta$.
\end{prop}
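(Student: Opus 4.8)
\textbf{Proof proposal for Proposition~\ref{prop:nonc}.}

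The plan is to estimate $\Upsilon(\rho)$ directly from its definition \eqref{defUps1} as a supremum of the quantities $v_{jkl}(s,x,\rho)$ given by \eqref{defvjkl}, and then to sum over a non-condensing sequence using Hölder's inequality. First I would fix indices $j,k,l$ and the corresponding integration interval $(\alpha,\beta)\subseteq(0,1)$ prescribed after \eqref{defrhojkl}. Since $\diag(A_0)\equiv 0$, the only nontrivial case is $j\ne l$ or ($j=l\ne k$), and in every case the exponent $\rho[(\om_l-\om_k)(t-s)+(\om_j-\om_k)(x-t)]$ has nonpositive real part on the relevant interval by the ordering \eqref{order} and the definition \eqref{defsec} of $\Gamma_{\kappa,h}$. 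Writing $\theta=\om_j-\om_l$ (the coefficient of $t$ inside the exponent, after collecting terms) and noting $\mathrm{Re}(\rho\theta)\ne 0$ in the interior of $\Gamma_{\kappa,h}$, the integral $\int_{\alpha}^{\beta} a_{0,jl}(t)\exp(\rho\theta t+(\text{linear in }s,x))\,dt$ is, up to a bounded unimodular prefactor, of the form $\int g(t)e^{\rho\theta t}\,dt$ with $g\in L_{\mu}(0,1)$, $\mathrm{Re}(\rho\theta)\le 0$.

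The key step is the scalar estimate: for $g\in L_{\mu}(0,1)$, $\mu\in(1,2]$, and $z\in\mathbb C$ with $\mathrm{Re}\,z\le 0$,
\begin{equation} \label{eq:scalarest}
\sup_{0\le a\le b\le 1}\left| \int_a^b g(t)e^{zt}\,dt \right| \le \Phi(z)\,\|g\|_{L_{\mu}},
\end{equation}
where $\Phi(z)\to 0$ as $|z|\to\infty$ and, crucially, $\{\Phi(z_k)\}\in l_{\mu'}$ with $\|\{\Phi(z_k)\}\|_{l_{\mu'}}\le C$ whenever $\{z_k\}$ is non-condensing and stays in a half-plane $\mathrm{Re}\,z\le 0$ bounded away from the imaginary axis by a fixed cone. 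One obtains \eqref{eq:scalarest} by splitting $g=g_1+g_2$ with $g_1$ a trigonometric polynomial (or a smooth truncation) and $g_2$ small in $L_{\mu}$; for the smooth part one integrates by parts to gain a factor $|z|^{-1}$, and for the rough part one uses Hölder together with the Hausdorff–Young-type bound $\|e^{z\cdot}\|_{L_{\mu'}(0,1)}\le C|\mathrm{Re}\,z|^{-1/\mu'}$ valid when $\mathrm{Re}\,z<0$. The non-condensing hypothesis guarantees that $\sum_k |\mathrm{Re}\,z_k|^{-1}<\infty$ after grouping the $z_k$ into unit annuli, which is exactly what converts the pointwise decay into an $l_{\mu'}$ bound; this is where $\beta$ enters the constant. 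Since $z=\rho\theta$ with $\theta$ ranging over a finite set of fixed nonzero complex numbers and $\rho$ in $\Gamma_{\kappa,h}$, non-condensing for $\{\rho_k\}$ transfers to non-condensing for each $\{z_k\}$, and $|\mathrm{Re}(\rho\theta)|\asymp|\rho|$ uniformly on the closed sector. Taking the maximum over the finitely many triples $(j,k,l)$ and over $s,x$ then yields $\|\{\Upsilon(\rho_k)\}\|_{l_{\mu'}}\le C\|A_0\|_{L_{\mu}}$ with $C=C(h,\rho^*,\beta)$.

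The main obstacle I anticipate is the scalar estimate \eqref{eq:scalarest} in the borderline regime $\mu=2$ (so $\mu'=2$): there the naive bound $\|e^{z\cdot}\|_{L_2}\le C|\mathrm{Re}\,z|^{-1/2}$ combined with Hölder only gives $\sum_k|\mathrm{Re}\,z_k|^{-1}<\infty$, which is precisely the non-condensing sum, so the argument is tight and one must be careful that the splitting of $g$ does not lose a logarithm. The cleanest route is probably to appeal directly to the Fourier-analytic lemma underlying \cite{Sav-dis} (restated here for the system \eqref{sysw}): namely that the operator $g\mapsto\bigl(\sup_{a,b}|\int_a^b g e^{z_k t}dt|\bigr)_k$ is bounded from $L_{\mu}(0,1)$ to $l_{\mu'}$ for non-condensing $\{z_k\}$ in a proper subcone of the left half-plane, and to verify only that the geometry of $\Gamma_{\kappa,h}$ together with the finitely many rotations $\theta=\om_j-\om_l$ keeps us strictly inside such a subcone (this uses that $h>0$, so $\overline{\Gamma}_{\kappa,h}$ avoids the rays where some $\mathrm{Re}(\rho\om_j)$ coincide). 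Modulo that lemma, the remainder of the proof is the bookkeeping over indices sketched above.
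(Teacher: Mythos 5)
There is a genuine gap, and it lies exactly where you locate the difficulty. Your decay mechanism is exponential damping: you claim that for $j\ne l$ one has $\mathrm{Re}(\rho\theta)\ne 0$ inside $\Gamma_{\kappa,h}$, that $|\mathrm{Re}(\rho\theta)|\asymp|\rho|$ uniformly on the closed extended sector, and that ``$h>0$'' makes $\overline{\Gamma}_{\kappa,h}$ avoid the rays where some $\mathrm{Re}(\rho\om_j)$ coincide. That last statement is backwards: by \eqref{defsec} the extended sector is obtained by \emph{enlarging} $\Gamma_{\kappa}$, so for large $|\rho|$ it \emph{contains} neighbourhoods of both boundary rays of $\Gamma_{\kappa}$, i.e.\ precisely the rays on which $\mathrm{Re}\bigl(\rho(\om_l-\om_j)\bigr)=0$ for some $j\ne l$ (on $\overline{\Gamma}_{\kappa,h}$ the exponents in \eqref{defvjkl} are only bounded, not of negative real part of size $|\rho|$). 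This is not a technicality: the proposition is applied to the zero sequence $\{\rho_l\}$, which lies in the strip $\mathcal S_R$ along such a critical ray (Fig.~\ref{img:sectors2}), where $\mathrm{Re}\bigl(\rho_l(\om_r-\om_{r+1})\bigr)$ stays bounded. For those frequencies your factor $|\mathrm{Re}\,z|^{-1/\mu'}$ gives no decay at all; the integrals $v_{jkl}$ are genuinely oscillatory, and both the smallness of $\Upsilon(\rho)$ and its $l_{\mu'}$ summability must come from a Hausdorff--Young/Bessel-type inequality for the nonharmonic transforms $\int_a^b g(t)e^{\mathrm{i}\zeta_k t}\,dt$ at non-condensing frequencies $\zeta_k$ with bounded imaginary part, uniformly in the endpoints. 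That inequality \emph{is} the content of Savchuk's result, so your closing fallback of ``appealing directly to the Fourier-analytic lemma underlying \cite{Sav-dis}'' is not a finishing step but the whole proposition; at that point the proposal reduces to citing the statement to be proved, which is all the present paper does as well (Proposition~\ref{prop:nonc} is quoted from \cite{Sav-dis} without proof).

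Even in the favourable directions where $|\mathrm{Re}(\rho\theta)|\asymp|\rho|$ does hold, your summation step fails quantitatively: non-condensing means boundedly many $\rho_k$ in each unit annulus, hence after reordering $|\rho_k|\ge c\,k$, so $\sum_k|\mathrm{Re}\,z_k|^{-1}$ behaves like the harmonic series and diverges --- it is not ``precisely the non-condensing sum'', and for $\mu'=2$ the Hölder-plus-decay route does not merely risk losing a logarithm, it does not close at all. Two smaller slips: collecting the $t$-terms in \eqref{defvjkl} gives the frequency $\rho(\om_l-\om_j)$ (your $\theta$ has the opposite sign), and when $j=l$ the exponent is independent of $t$, those terms disappearing only because $\diag(A_0)\equiv 0$. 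To actually prove Proposition~\ref{prop:nonc} you would need to establish the bound $\bigl\|\{\sup_{a,b}|\int_a^b g(t)e^{\mathrm{i}\zeta_k t}\,dt|\}_k\bigr\|_{l_{\mu'}}\le C\|g\|_{L_\mu}$ for non-condensing $\{\zeta_k\}$ with bounded imaginary parts (e.g.\ via Hausdorff--Young on the line together with a Carleson/Plancherel--Polya-type argument controlling the bounded-density frequencies and the truncation), and only then do the finitely many rotations $\om_l-\om_j$ and the maxima over $s,x,j,k,l$ in \eqref{defUps1} reduce to the bookkeeping you describe.
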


Along with the system \eqref{sysw}, consider the system
\begin{gather} \label{syswt}
    \tilde w' = \rho B \tilde w + \tilde A(x, \rho) \tilde w, \quad x \in (0, 1), \\ \nonumber
    \tilde A(x, \rho) = \tilde A_0(x) + \sum_{k = 1}^{n-1} \rho^{-k} \tilde A_k(x).
\end{gather}
Suppose that $\tilde A_k \in L_1(0, 1)$, $k = \overline{0,n-1}$, and $\diag(\tilde A_0(x)) \equiv 0$.

Consider the difference $\hat w(x, \rho) = w(x, \rho) - \tilde w(x, \rho)$ of the fundamental solutions defined by Proposition~\ref{prop:Birk} for the systems \eqref{sysw} and \eqref{syswt}.

\begin{thm} \label{thm:Birk}
Suppose that
\begin{equation} \label{eqA}
A_k(x) = \tilde A_k(x) \:\: \text{a.e. on} \:\: (0, 1), \quad k = \overline{0, d-1},
\end{equation}
for a fixed $d \in \{ 1, \ldots, n-1 \}$. Then
\begin{gather} \nonumber
\hat w(x, \rho) = \hat {\mathcal E}(x, \rho) \exp(\rho B x), \\ \label{estEh}
\max_x \left \| \rho^d \hat {\mathcal E}(x, \rho) - \int_0^x \diag(\hat A_d(t)) \, dt \right\| \le C (\Upsilon(\rho) + \Upsilon_d(\rho) + |\rho|^{-1}), \quad \rho \in \overline{\mathcal G},
\end{gather}
where
\begin{gather} \label{defUps}
\Upsilon(\rho) := \max_{j,k,l,s,x} \{ |v_{jkl}(s,x,\rho)|, |\tilde v_{jkl}(s,x,\rho)|\}, \\ \label{defUpsd}
\Upsilon_d(\rho) := \max_{j\ne k; x} |\al_{jk}(x, \rho)|, \quad
\al_{jk}(x, \rho) := \int_{b_{jk}}^x \hat a_{d,jk}(t) \exp(\rho(\om_j - \om_k)(x - t)) \, dt, \\ \label{defb}
b_{jk} := \begin{cases}
            0, \quad j \ge k, \\
            1, \quad j < k.
         \end{cases}
\end{gather}
\end{thm}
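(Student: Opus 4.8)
The first assertion is immediate: applying Proposition~\ref{prop:Birk} to both \eqref{sysw} and \eqref{syswt} gives $w = (I + \mathcal E)\exp(\rho Bx)$ and $\tilde w = (I + \tilde{\mathcal E})\exp(\rho Bx)$, so $\hat w = (\mathcal E - \tilde{\mathcal E})\exp(\rho Bx)$ with $\hat{\mathcal E} = \mathcal E - \tilde{\mathcal E}$. The substance is \eqref{estEh}, which I would derive from the integral equation for $\hat{\mathcal E}$. Substituting $w = (I + \mathcal E)\exp(\rho Bx)$ into \eqref{sysw} and using the normalization built into Proposition~\ref{prop:Birk} leads, as in \cite{SS20}, to the Volterra-type equation $\mathcal E = \mathcal K_A(I + \mathcal E)$, where $\mathcal K_A M := \mathcal K_0(AM)$ (matrix product) and $\mathcal K_0$ has $(j,k)$-entry $(\mathcal K_0 M)_{jk}(x) = \int_{b_{jk}}^x \exp\!\big(\rho(\om_j - \om_k)(x - t)\big)M_{jk}(t)\,dt$, with $b_{jk}$ as in \eqref{defb} and the kernel bounded on $\overline{\mathcal G}$ (as in \eqref{defvjkl}). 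Writing the same identity for $\tilde{\mathcal E}$, subtracting, and using the elementary identity $A\mathcal E - \tilde A\tilde{\mathcal E} = A\hat{\mathcal E} + \hat A\tilde{\mathcal E}$, I obtain
\[ \hat{\mathcal E} = \mathcal K_0(G) + \mathcal K_A(\hat{\mathcal E}), \qquad G := \hat A\,(I + \tilde{\mathcal E}). \]

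Next I would bound the source. By hypothesis \eqref{eqA}, $\hat A(x,\rho) = \sum_{k \ge d}\rho^{-k}\hat A_k(x)$, so $\|\hat A(\cdot,\rho)\|_{L_1} \le C|\rho|^{-d}$; together with $\max_x\|\tilde{\mathcal E}\| \le C(\Upsilon(\rho) + |\rho|^{-1})$ from \eqref{estE} for \eqref{syswt}, this gives $\|G(\cdot,\rho)\|_{L_1} \le C|\rho|^{-d}$ and, more precisely, $\rho^d G = \hat A_d + G_1$ with $\|G_1(\cdot,\rho)\|_{L_1} \le C(\Upsilon(\rho) + |\rho|^{-1})$: the $|\rho|^{-1}$ comes from the terms $k > d$ of $\hat A$, and the $\Upsilon(\rho)$ from $\hat A_d\tilde{\mathcal E}$. (From here, $\Upsilon$ denotes the combined quantity \eqref{defUps}, since $\tilde{\mathcal E}$, and hence the $\tilde v_{jkl}$, enters through $G$.)

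To solve, I would iterate once more: $\hat{\mathcal E} = \mathcal K_0(G) + \mathcal K_A(\mathcal K_0(G)) + \mathcal K_A^2(\hat{\mathcal E})$. The core fact behind Proposition~\ref{prop:Birk} and Proposition~\ref{prop:Ups} in \cite{SS20} is that, after interchanging the order of integration, the $A_0$-parts of $\mathcal K_A^2$ and of $\mathcal K_A\mathcal K_0$ are controlled entrywise by integrals against the kernels in \eqref{defvjkl}; hence $\max_x\|\mathcal K_A^2 M\| \le C(\Upsilon(\rho) + |\rho|^{-1})\max_x\|M\|$, so $\mathcal K_A^2$ is a contraction for $|\rho| \ge \rho^*$, and also $\max_x\|\mathcal K_A(\mathcal K_0(G))\| \le C(\Upsilon(\rho) + |\rho|^{-1})\|G\|_{L_1}$. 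The contraction gives $\max_x\|\hat{\mathcal E}\| \le C|\rho|^{-d}$, so both $\rho^d\mathcal K_A(\mathcal K_0(G))$ and $\mathcal K_A^2(\rho^d\hat{\mathcal E})$ are $O(\Upsilon(\rho) + |\rho|^{-1})$ uniformly in $x$, and therefore $\rho^d\hat{\mathcal E} = \rho^d\mathcal K_0(G) + O(\Upsilon(\rho) + |\rho|^{-1})$. Finally $\rho^d\mathcal K_0(G) = \mathcal K_0(\hat A_d) + \mathcal K_0(G_1)$ with $\max_x\|\mathcal K_0(G_1)\| \le C\|G_1\|_{L_1} \le C(\Upsilon(\rho) + |\rho|^{-1})$, while $\mathcal K_0(\hat A_d)$ has diagonal entries $\int_0^x\hat a_{d,jj}(t)\,dt$ (the exponent is $1$ and $b_{jj}=0$) and, for $j \ne k$, $(j,k)$-entry $\al_{jk}(x,\rho)$; thus $\mathcal K_0(\hat A_d) = \int_0^x\diag(\hat A_d(t))\,dt + R(x,\rho)$ with $\max_x\|R\| = \Upsilon_d(\rho)$ by \eqref{defUpsd}. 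Summing the estimates yields \eqref{estEh}.

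The main obstacle is that $\mathcal K_A$ itself is \emph{not} a contraction — $\max_x\|\mathcal K_A M\|$ is only bounded by $C\|A_0\|_{L_1}\max_x\|M\|$, and $\|A_0\|_{L_1}$ need not be small — so the difference equation cannot be solved by a single Neumann series, and one must pass to $\mathcal K_A^2$ and exploit the oscillatory decay of the $v_{jkl}$ from \cite{SS20}. The genuinely new work is the bookkeeping in the difference setting: tracking the factor $|\rho|^{-d}$ supplied by \eqref{eqA} at each stage, recognising $\mathcal K_A(\mathcal K_0(G))$ and the dangerous part of $\mathcal K_A^2(\hat{\mathcal E})$ as integrals governed by the kernels \eqref{defvjkl} so that they are absorbed into $\Upsilon(\rho)$, and isolating the non-decaying off-diagonal contribution of $\mathcal K_0(\hat A_d)$ as $\al_{jk}(x,\rho)$, so that the only term surviving with a definite leading value is the diagonal piece $\int_0^x\diag(\hat A_d(t))\,dt$.
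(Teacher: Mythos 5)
Your proposal is correct and follows essentially the same route as the paper: the Savchuk--Shkalikov integral equation for the normalized matrix $z = I + \mathcal E$, subtraction of the two equations, smallness of the operator with kernel $\hat v$ coming from \eqref{eqA}, an interchange of integration order identifying the compositions with the kernels $v_{jkl}$, $\varrho_{jkl}$ so that only the squared operator is a contraction, and finally extraction of the leading term $\mathcal K_0(\hat A_d)$, whose diagonal gives $\int_0^x \diag(\hat A_d)\,dt$ and whose off-diagonal entries are exactly the $\al_{jk}$ absorbed into $\Upsilon_d(\rho)$. The only differences are cosmetic (matrix rather than column-wise notation, the splitting $\hat A(I+\tilde{\mathcal E}) + A\hat{\mathcal E}$ instead of $\hat A\,z + \tilde A\,\hat z$, and a single iteration plus an a priori bound in place of the paper's paired Neumann series), so the argument is a faithful reconstruction of the paper's proof.
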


Clearly, Propositions~\ref{prop:Ups} and~\ref{prop:nonc} are valid for $\Upsilon(\rho)$ defined by \eqref{defUps}. Proposition~\ref{prop:Ups} can be similarly proved for $\Upsilon_d(\rho)$. Proposition~\ref{prop:nonc} is valid for $\Upsilon_d(\rho)$ if $A_0$ is replaced with $A_d$.

\begin{proof}[Proof of Theorem~\ref{thm:Birk}]
In this proof, we apply the technique of \cite{SS20}.
By changing the variables $w(x, \rho) = z(x, \rho) \exp(\rho B x)$, $z(x, \rho) = [z_{jk}(x, \rho)]_{j,k = 1}^n$, we reduce the system \eqref{sysw} to the form
$$
z' = \rho (B z - z B) + A(x, \rho) z, \quad x \in (0, 1).
$$
By integrating the latter system with the initial conditions
$$
z_{jk}(1, \rho) = 0, \quad j < k, \qquad
z_{jk}(0, \rho) = \de_{jk}, \quad j \ge k,
$$
we obtain the integral equations
\begin{equation} \label{intz}
    z_{jk}(x, \rho) - \de_{jk} = \sum_{l = 1}^n \int_{b_{jk}}^x v_{jl}(t, \rho) \exp(\rho(\om_j - \om_k)(x - t)) z_{lk}(t, \rho) \, dt, \quad j,k = \overline{1, n},
\end{equation}
where $b_{jk}$ are defined by \eqref{defb}. The matrix function $w(x, \rho) = z(x, \rho) \exp(\rho B x)$ that is constructed by the solution $z(x, \rho)$ of the system \eqref{intz} is the fundamental matrix of Proposition~\ref{prop:Birk}.

For each fixed $k$, the system \eqref{intz} implies 
\begin{equation} \label{eqzk}
z_k = z_k^0 + V_k z_k,
\end{equation}
where $z_k = z_k(x, \rho)$ is the $k$-th column of $z(x, \rho)$, $z_k^0$ is the $k$-th column of $I$, and $V_k = V_k(\rho)$ is the integral operator given by the right-hand side of \eqref{intz}. The similar relation can be obtained for the system \eqref{syswt}:
\begin{equation} \label{eqzkt}
    \tilde z_k = z_k^0 + \tilde V_k \tilde z_k.
\end{equation}

Subtracting \eqref{eqzkt} from \eqref{eqzk}, we get
$$
\hat z_k = \hat V_k z_k + \tilde V_k \hat z_k,
$$
where $\hat z_k = z_k - \tilde z_k$, $\hat V_k = V_k - \tilde V_k$. Formal calculations show that
\begin{align} \nonumber
    \hat z_k & = \hat V_k z_k + \sum_{\nu = 0}^{\iy} \tilde V_k^{\nu} \hat V_k z_k \\ \label{relzh}
    & = \hat V_k z_k^0 + \hat V_k V_k z_k^0 + \hat V_k V_k^2 z_k + \sum_{\nu = 0}^{\iy} \tilde V_k^{2\nu} (\tilde V_k \hat V_k z_k + \tilde V_k^2 \hat V_k z_k).
\end{align}

It has been proved in \cite{SS20} that
\begin{gather} \label{estzk}
    \| z_k \| \le C, \\
    \label{estVk1}
    \| V_k z_k^0 \|_{L_{\iy}} \le C (\Upsilon(\rho) + |\rho|^{-1}), \\ \label{estVk2}
    \| V_k^2 \|_{L_{\iy} \to L_{\iy}}, \| \tilde V_k^2 \|_{L_{\iy} \to L_{\iy}} \le C (\Upsilon(\rho) + |\rho|^{-1}),
\end{gather}
for $\rho \in \overline{\Gamma}_{\kappa, h}$, $|\rho| \ge \rho^*$. We suppose that $\rho$ belongs to this region everywhere below in this proof. Here and below, the notation $\| . \|_{L_{\iy} \to L_{\iy}}$ is used for the operator norm in the vector space $L_{\mu}(0,1)$.

By virtue of \eqref{eqA}, we have
$$
\hat A(x, \rho) = \sum_{k = d}^{\iy} \rho^{-k} \hat A_k(x).
$$
Hence
\begin{equation} \label{estvh}
\| \hat v_{jl}(., \rho) \|_{L_1} \le C |\rho|^{-1}.
\end{equation}
Using this estimate together with \eqref{defvjkl} and \eqref{intz}, we obtain
\begin{equation} \label{estVk3}
\| \hat V_k \|_{L_{\iy} \to L_{\iy}} \le C |\rho|^{-d}.
\end{equation}

The estimates \eqref{estVk1} and \eqref{estVk2} together imply
\begin{equation} \label{estVk4}
\| \hat V_k V_k z_k^0 \|_{L_{\iy}} \le C |\rho|^{-d} (\Upsilon(\rho) + |\rho|^{-1}).
\end{equation}
By using \eqref{estzk}, \eqref{estVk2}, and \eqref{estVk3}, we get
\begin{equation} \label{estVk5}
\| \hat V_k V_k^2 z_k \|_{L_{\iy}}, \| \tilde V_k^2 \hat V_k z_k \|_{L_{\iy}} \le C |\rho|^{-d} (\Upsilon(\rho) + |\rho|^{-1}).
\end{equation}

It remains to estimate the term $\tilde V_k \hat V_k z_k$. For this purpose, we will show that
\begin{equation} \label{estVk6}
    \| \hat V_k V_k \|_{L_{\iy} \to L_{\iy}} \le C |\rho|^{-d} (\Upsilon(\rho) + |\rho|^{-1}). 
\end{equation}
Let $f$ be an arbitrary vector function of $L_{\iy}(0,1)$ and $g = \tilde V_k \hat V_k f$. In the element-wise form
\begin{multline*}
g_j(x, \rho) = \sum_{l,m = 1}^n \int_{b_{jk}}^x \tilde v_{jl}(t, \rho) \exp(\rho(\om_j - \om_k)(x - t)) \\ \times \int_{b_{lk}}^t \hat v_{lm}(s,\rho) \exp(\rho(\om_l - \om_k)(t-s)) f_m(s) \, ds \, dt.
\end{multline*}
By changing the integration order and taking \eqref{defvjkl}, \eqref{defrhojkl} into account, we derive
$$
g_j(x, \rho) = \sum_{\xi = 1}^n \int_0^1 \left( \sum_{l = 1}^n \hat v_{l\xi}(s, \rho) (\tilde v_{jkl}(s,x,\rho) + \tilde \varrho_{jkl}(s,x,\rho))\right) f_{\xi}(s) \, ds.
$$
By using \eqref{estrho} for $\tilde \varrho_{jkl}$, \eqref{defUps}, and \eqref{estvh}, we obtain the estimate
$$
\max_{x,j} |g_j(x, \rho)| \le C|\rho|^{-d} (\Upsilon(\rho) + |\rho|^{-1}) \max_{\xi,s} |f_{\xi}(s)|,
$$
which yields \eqref{estVk6}.

In view of \eqref{estVk2} and Proposition~\ref{prop:Ups}, one can choose $\rho^*$ so that 
\begin{equation} \label{estVk7}
\| \tilde V_k^2(\rho) \|_{L_{\iy} \to L_{\iy}} \le \frac{1}{2}, \quad |\rho| \ge \rho^*.
\end{equation}

Combining \eqref{relzh}, \eqref{estVk3}--\eqref{estVk7}, we obtain
\begin{equation} \label{estzh}
\| \hat z_k - \hat V_k z_k^0 \|_{L_{\iy}}\le C |\rho|^{-d} (\Upsilon(\rho) + |\rho|^{-1}).
\end{equation}

Now consider the vector function $\eps_k = \hat V_k z_k^0$ with the elements
$$
\eps_{jk}(x, \rho) = \int_{b_{jk}}^x \hat v_{jk}(t, \rho) \exp(\rho (\om_j - \om_k) (x - t)) \, dt
$$
Since 
$$
\max_{j,k} \| \rho^d \hat v_{jk}(., \rho) - \hat a_{d,jk}(., \rho) \|_{L_1} \le C |\rho|^{-1},
$$
we have
$$
\max_{j,x} |\rho^d \eps_{jj}(x, \rho) - \hat a_{d,jj}(x)| \le C |\rho|^{-1}, \quad
\max_{j\ne k; x}|\eps_{jk}(x, \rho)| \le C |\rho|^{-d}(\Upsilon_d(\rho) + |\rho|^{-1}).
$$
Combining the latter estimates with \eqref{estzh}, we obtain \eqref{estEh} for $\hat {\mathcal E}(x, \rho) = \hat z(x, \rho)$.
\end{proof}

Now, we apply the obtained results to equation \eqref{eqv}. Returning from the system \eqref{sysw} back to \eqref{sysu} and then to \eqref{sys} (which is equivalent to \eqref{eqv}), we arrive at Proposition~\ref{prop:y}, which is an immediate corollary of Proposition~\ref{prop:Birk}. For the Mirzoev-Shkalikov case $n = 2m$, $i_{2k+j} = m-k-j$, $j = 0,1$, Proposition~\ref{prop:y} has been obtained in \cite{SS20}.

\begin{prop} \label{prop:y}
For any fixed sector $\Gamma_{\kappa, h}$ and some $\rho^* > 0$, equation \eqref{eqv} has a FSS $\{ y_k(x, \rho) \}_{k = 1}^n$ whose quasi-derivatives $y_k^{[j]}(x, \rho)$, $k = \overline{1,n}$, $j = \overline{0, n-1}$, are continuous for $x \in [0, 1]$, $\rho \in \overline{\mathcal G}$, analytic in $\rho \in \mathcal G$, for each fixed $x \in [0, 1]$, and satisfy the relation
\begin{equation} \label{asympty}
y_k^{[j]}(x, \rho) = (\rho \om_k)^j \exp(\rho \om_k x) (1 + \zeta_{jk}(x, \rho)),
\end{equation}
where
\begin{equation} \label{defzeta}
\max_{j,k,x}|\zeta_{jk}(x, \rho)| \le C(\Upsilon(\rho) + |\rho|^{-1}), \quad \rho \in \overline{\mathcal G},
\end{equation}
and $\Upsilon(\rho)$ is defined by \eqref{defUps1}.
\end{prop}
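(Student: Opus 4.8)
The plan is to derive Proposition~\ref{prop:y} from Proposition~\ref{prop:Birk} by undoing, one by one, the transformations relating \eqref{sysw}, \eqref{sysu}, \eqref{sys}, and \eqref{eqv}, and then reading off the asymptotics of the columns of the resulting fundamental matrix. Concretely, I would start from the Birkhoff fundamental matrix $w(x,\rho)=(I+\mathcal E(x,\rho))\exp(\rho B x)$ of \eqref{sysw} furnished by Proposition~\ref{prop:Birk} (with $A(x,\rho)$ the matrix \eqref{defA} associated to $\ell_n$) and set
$$
\vec y(x,\rho):=\diag\{1,\rho,\dots,\rho^{n-1}\}\,\Omega\,w(x,\rho).
$$
Since $\Omega$ is a constant invertible matrix and $\diag\{1,\rho,\dots,\rho^{n-1}\}$ is invertible on $\overline{\mathcal G}$ (where $|\rho|\ge\rho^*>0$), this $\vec y(x,\rho)$ is, for each such $\rho$, a fundamental matrix of the system \eqref{sys} by the construction of \eqref{sysu} and \eqref{sysw}. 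Denote its columns by $\vec y_k(x,\rho)$, $k=\overline{1,n}$, and let $y_k(x,\rho)$ be the first component of $\vec y_k$. Because the entries of $F(x)+\Lambda$ lie in $L_1(0,1)$ by \eqref{classf}, the components of $\vec y_k$ are absolutely continuous on $[0,1]$, so $y_k\in\mathcal D_F$; and since the first $n-1$ rows of \eqref{sys} reproduce the definition \eqref{quasi} of the quasi-derivatives while the last row gives $y^{[n]}=\la y$, the function $y_k$ is a solution of \eqref{eqv} in the sense of Section~\ref{sec:reg}, with $(\vec y_k)_{p}=y_k^{[p-1]}$, $p=\overline{1,n}$. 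Linear independence of $\{y_k\}$ as solutions of \eqref{eqv} follows from the invertibility of $\vec y(x,\rho)$ together with the linearity of the maps $y\mapsto y^{[j]}$: a linear relation among the $y_k$ would propagate to all their quasi-derivatives, hence to the columns of $\vec y$. Thus $\{y_k\}_{k=1}^n$ is an FSS of \eqref{eqv}.

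Next I would read off the asymptotics. From $\exp(\rho B x)=\diag\{e^{\rho\om_1 x},\dots,e^{\rho\om_n x}\}$ the $(l,k)$ entry of $w$ equals $(\de_{lk}+\mathcal E_{lk}(x,\rho))e^{\rho\om_k x}$; multiplying on the left by $\Omega=[\om_k^{\,p-1}]_{p,k=1}^n$ and then by $\diag\{1,\rho,\dots,\rho^{n-1}\}$ gives, for $p=\overline{1,n}$,
$$
y_k^{[p-1]}(x,\rho)=\rho^{\,p-1}\Bigl(\om_k^{\,p-1}+\sum_{l=1}^n\om_l^{\,p-1}\mathcal E_{lk}(x,\rho)\Bigr)e^{\rho\om_k x}=(\rho\om_k)^{p-1}e^{\rho\om_k x}\bigl(1+\zeta_{p-1,k}(x,\rho)\bigr),
$$
where $\zeta_{jk}(x,\rho):=\om_k^{-j}\sum_{l=1}^n\om_l^{\,j}\mathcal E_{lk}(x,\rho)$ (the factor $\om_k^{-j}$ is legitimate since $\om_k^n=1$). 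As the $\om_l$ are $n$-th roots of unity, $|\om_l|=1$, so $|\zeta_{jk}(x,\rho)|\le n\max_x\|\mathcal E(x,\rho)\|$, and estimate \eqref{estE} of Proposition~\ref{prop:Birk} (with $\Upsilon$ as in \eqref{defUps1}) yields \eqref{defzeta}. Finally, the continuity of $y_k^{[j]}(x,\rho)$ on $[0,1]\times\overline{\mathcal G}$ and its analyticity in $\rho\in\mathcal G$ for each fixed $x$ are inherited from the corresponding properties of $\mathcal E(x,\rho)$, since the remaining operations — multiplication by the constant matrix $\Omega$, by the polynomial matrix $\diag\{1,\rho,\dots,\rho^{n-1}\}$, and by the entire functions $e^{\rho\om_k x}$ — preserve both.

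The argument is essentially bookkeeping, so I do not expect a genuine obstacle; the one point requiring care is keeping the index conventions consistent when passing from the matrix entries of $w$ to the quasi-derivatives $y_k^{[j]}$ — the shift between the quasi-derivative order $j\in\{0,\dots,n-1\}$ and the matrix row $j+1$, and the correct absorption of the factors $\rho^{\,j}$ and $\om_k^{\,j}$ into the prefactor $(\rho\om_k)^j e^{\rho\om_k x}$ so that the remainder $\zeta_{jk}$ is genuinely $O(\Upsilon(\rho)+|\rho|^{-1})$ uniformly in $j,k,x$.
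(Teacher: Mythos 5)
Your proposal is correct and follows exactly the route the paper intends: it treats Proposition~\ref{prop:y} as an immediate corollary of Proposition~\ref{prop:Birk}, undoing the substitutions $w=\Omega^{-1}u$, $\vec y=\diag\{1,\rho,\ldots,\rho^{n-1}\}u$ to pass from \eqref{sysw} back to \eqref{sys}/\eqref{eqv} and reading off \eqref{asympty}, \eqref{defzeta} from \eqref{estE}. The bookkeeping (columns as solutions in the sense of Section~\ref{sec:reg}, linear independence from invertibility of the fundamental matrix, absorption of $\rho^j\om_k^j$ and the bound $|\zeta_{jk}|\le n\max_x\|\mathcal E(x,\rho)\|$) is exactly what the paper leaves implicit, and it is carried out correctly.
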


Consider the differential expressions $\ell_n(y)$ and $\tilde \ell_n(y)$ of form \eqref{defl} with the coefficients $(\sigma_{\nu})$ and $(\tilde \sigma_{\nu})$, respectively, and $i_{\nu} = \tilde i_{\nu}$, $\nu = \overline{0, n-2}$. Suppose that 
\begin{equation} \label{eqsi}
\sigma_{\nu}(x) = \tilde \sigma_{\nu}(x) \:\: \text{a.e. on} \:\: (0, 1), \quad \nu = \overline{\nu_0, n-2}, 
\end{equation}
for a fixed $\nu_0 \in \{ 1, \ldots, n-2 \}$.
Let us study the influence of this condition on the matrices $F(x)$ and $\tilde F(x)$.

According to Definition~\ref{def:F}, the coefficient $\sigma_{\nu}$ influences on the lower diagonal of the matrix $F(x)$ with the index $d_{\nu} = n - 1 - (\nu + i_{\nu})$ and, in some cases, on the diagonals with greater indices.
We mean that the diagonal containing the entry $f_{k,j}$, $k \ge j$, has the index $(k - j)$. That is, the main diagonal has index $0$, the next lower diagonal, index $1$, etc. (see Fig.~\ref{img:indices}). Consequently, the condition \eqref{eqsi} implies that the corresponding diagonals of the matrices $F(x)$ and $\tilde F(x)$ with indices $0$, $1$, \ldots, $(d-1)$ coincide, where
\begin{equation} \label{defd}
    d := n - 1 - \max_{\nu = \overline{0, \nu_0-1}} (\nu + i_{\nu}).
\end{equation}

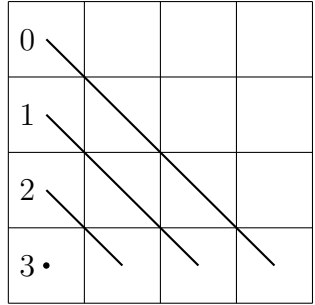
\begin{figure}[h!]
\centering
\begin{tikzpicture}
\draw (0, 0) edge (4, 0);
\draw (0, 0) edge (0, 4);
\draw (4, 0) edge (4, 4);
\draw (0, 4) edge (4, 4);
\draw (0, 1) edge (4, 1);
\draw (0, 2) edge (4, 2);
\draw (0, 3) edge (4, 3);
\draw (1, 0) edge (1, 4);
\draw (2, 0) edge (2, 4);
\draw (3, 0) edge (3, 4);
\draw[thick] (0.5, 3.5) edge (3.5, 0.5);
\draw (0.25, 3.5) node{$0$};
\draw[thick] (0.5, 2.5) edge (2.5, 0.5);
\draw (0.25, 2.5) node{$1$};
\draw[thick] (0.5, 1.5) edge (1.5, 0.5);
\draw (0.25, 1.5) node{$2$};
\filldraw (0.5, 0.5) circle(1pt);
\draw (0.25, 0.5) node{$3$};
\end{tikzpicture}
\caption{Indices of diagonals}
\label{img:indices}
\end{figure}

The $d$-th diagonal of $\hat F(x) = F(x) - \tilde F(x)$ contains linear combinations of $\hat \sigma_{\nu}(x)$ with indices $\nu \in \mathcal N_d$,
\begin{equation*}
    \mathcal N_d = \{ \nu = \overline{0, \nu_0-1} \colon n - 1 - (\nu + i_{\nu}) = d \}.
\end{equation*}

Transforming the systems of form \eqref{sys} to the form \eqref{sysw}, we conclude that \eqref{eqA} holds and $\hat A_d$ depends on $\hat \sigma_{\nu}$ with $\nu \in \mathcal N_d$. More precisely,
$$
\hat A_d(x) = \Omega^{-1} \hat F_d(x) \Omega.
$$
Hence
$$
\hat a_{d,ii}(x) = \frac{1}{n} \sum_{k + j = d} \hat f_{k,j}(x) \om_i^{j-k}, \quad i = \overline{1, n}. 
$$
By using Definition~\ref{def:F}, we obtain the relation
$$
\sum_{k + j = d} \hat f_{k,j}(x) = \sum_{\nu \in \mathcal N_d} S_{\nu} \hat \sigma_{\nu}(x),
$$
where
$$
S_{\nu} = \begin{cases}
                (-1)^{k+1}\sum\limits_{s = 0}^{i_{\nu}} (-1)^s C_{i_{\nu}}^s, \quad \nu = 2k, \\
                (-1)^{k+1} \sum\limits_{s = 0}^{i_{\nu} + 1} (-1)^s  C_{i_{\nu} + 1}^s + 2 (-1)^{k+1} \sum\limits_{s = 0}^{i_{\nu}} (-1)^s C_{i_{\nu}}^s, \quad \nu = 2k+1.
        \end{cases}        
$$
Clearly, $S_{\nu} = 0$ if $i_{\nu} > 0$. Therefore, $\diag(\hat A_d(x))$ is a linear combination of $\sigma_{\nu}(x)$, $\nu \in \mathcal N_d^0$:
\begin{equation*}
    \mathcal N_d^0 = \{ \nu \in \mathcal N_d \colon i_{\nu} = 0 \}.
\end{equation*}

The above arguments allow us to estimate $\hat \zeta_{jk}(x,\rho) = \zeta_{jk}(x, \rho) - \tilde \zeta_{jk}(x,\rho)$ for the remainders from Proposition~\ref{prop:y}, by using Theorem~\ref{thm:Birk}. Thus, we obtain the following corollary.

\begin{cor} \label{cor:y}
Suppose that \eqref{eqsi} holds for the coefficients of the differential expressions $\ell_n(y)$ and $\tilde \ell_n(y)$. Then the difference $\hat \zeta_{jk}(x, \rho)$ of the corresponding Birkhoff-type solution remainders in formula \eqref{asympty} satisfies the estimate
\begin{equation} \label{estzeta}
    \max_x \left|\rho^d \hat \zeta_{jk}(x, \rho) - \int_0^x \hat \theta_{jk}(t) \, dt \right| \le C  (\Upsilon(\rho) + \Upsilon_d(\rho) + |\rho|^{-1}), \quad \rho \in \overline{\mathcal G},
\end{equation}
where $d$ is defined by \eqref{defd}, $\Upsilon(\rho)$ and $\Upsilon_d(\rho)$, by \eqref{defUps} and \eqref{defUpsd}, respectively, and the functions $\hat \theta_{jk}(x)$ are some linear combinations of $\hat \sigma_{\nu}(x)$, $\nu \in \mathcal N_d^0$.
\end{cor}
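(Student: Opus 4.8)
The plan is to push the matrix estimate \eqref{estEh} of Theorem~\ref{thm:Birk}, which concerns the first-order system \eqref{sysw}, back through the substitutions of Section~\ref{sec:reg} to the quasi-derivatives of the FSS in Proposition~\ref{prop:y}. Recall that $\vec y = \diag\{ 1, \rho, \ldots, \rho^{n-1}\}\,\Omega\, w$, so the $k$-th element $y_k$ of the FSS arises from the $k$-th column $w_k = (e_k + \mathcal E_k)\exp(\rho\om_k x)$ of the Birkhoff fundamental matrix of Proposition~\ref{prop:Birk}, where $e_k$ is the $k$-th column of $I$ and $\mathcal E_k$ is the $k$-th column of $\mathcal E$. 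Using $\Omega e_k = \mbox{col}(1, \om_k, \ldots, \om_k^{n-1})$ and $\exp(\rho B x) = \diag\{ e^{\rho\om_1 x}, \ldots, e^{\rho\om_n x}\}$, a short computation gives $y_k^{[j]}(x,\rho) = (\rho\om_k)^j \exp(\rho\om_k x)\bigl(1 + \om_k^{-j}(\Omega\mathcal E_k(x,\rho))_{j+1}\bigr)$, whence
$$
\zeta_{jk}(x,\rho) = \om_k^{-j}\bigl(\Omega\,\mathcal E_k(x,\rho)\bigr)_{j+1}, \qquad \hat\zeta_{jk}(x,\rho) = \om_k^{-j}\bigl(\Omega\,\hat{\mathcal E}_k(x,\rho)\bigr)_{j+1},
$$
the second identity following by subtraction, since $\Omega$, $B$, and the roots $\om_k$ are common to $\mathcal L$ and $\tilde{\mathcal L}$.

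Next I would invoke the structural analysis already carried out just before the corollary: by \eqref{eqsi} and Definition~\ref{def:F}, the lower diagonals of $F(x)$ and $\tilde F(x)$ of indices $0, 1, \ldots, d-1$ coincide, so after the passage to the form \eqref{sysw} the hypothesis \eqref{eqA} of Theorem~\ref{thm:Birk} holds with $d$ as in \eqref{defd}. Applying Theorem~\ref{thm:Birk} then bounds $\max_x \| \rho^d\hat{\mathcal E}(x,\rho) - \int_0^x \diag(\hat A_d(t))\, dt \|$ by the right-hand side of \eqref{estzeta}. Since the $k$-th column of the diagonal matrix $\int_0^x \diag(\hat A_d(t))\, dt$ is $\bigl(\int_0^x \hat a_{d,kk}(t)\, dt\bigr) e_k$, applying $\Omega$ and reading off the $(j+1)$-th entry produces $\om_k^{j}\int_0^x \hat a_{d,kk}(t)\, dt$; dividing by $\om_k^j$ and setting $\hat\theta_{jk}(x) := \hat a_{d,kk}(x)$, the bound \eqref{estzeta} follows, because multiplication by the fixed invertible matrix $\Omega$ and by the unimodular scalar $\om_k^{-j}$ changes the estimate only by a constant factor. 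Finally, the computation of $\hat a_{d,ii}(x)$ through Definition~\ref{def:F} performed above --- in which the coefficients $S_\nu$ vanish whenever $i_\nu > 0$ --- shows that $\hat\theta_{jk} = \hat a_{d,kk}$ is a linear combination of $\hat\sigma_\nu$, $\nu \in \mathcal N_d^0$, as required.

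The only delicate point is the bookkeeping of the two successive changes of variables and the verification that transferring the matrix estimate \eqref{estEh} to the scalar remainders $\hat\zeta_{jk}$ --- i.e.\ passing through the fixed matrix $\Omega$ and the factors $\om_k^{-j}$ --- does not worsen the order of the remainder. Once the identity $\hat\zeta_{jk} = \om_k^{-j}(\Omega\hat{\mathcal E}_k)_{j+1}$ and the column structure of $\int_0^x \diag(\hat A_d)\, dt$ are in hand, the corollary is an immediate consequence of Theorem~\ref{thm:Birk} together with the special structure of $F(x)$ established in this section.
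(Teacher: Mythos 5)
Your proposal is correct and follows essentially the same route as the paper: the paper obtains the corollary by combining Theorem~\ref{thm:Birk} (applied after noting that \eqref{eqsi} yields \eqref{eqA} with $d$ from \eqref{defd}) with the preceding computation showing $\diag(\hat A_d)$ is a linear combination of $\hat\sigma_\nu$, $\nu\in\mathcal N_d^0$, exactly as you do. Your explicit bookkeeping of the back-substitution $\vec y=\diag\{1,\rho,\ldots,\rho^{n-1}\}\Omega w$, giving $\hat\zeta_{jk}=\om_k^{-j}(\Omega\hat{\mathcal E}_k)_{j+1}$ and $\hat\theta_{jk}=\hat a_{d,kk}$, is a correct filling-in of the transfer step the paper leaves implicit.
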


Note that, if $\hat \sigma_{\nu} \in L_{\mu}(0,1)$ for all $\nu \in \mathcal N_d^0$, then $\hat \theta_{jk} \in L_{\mu}(0,1)$. If $\mathcal N_d^0 = \varnothing$, then $\hat \theta_{jk} = 0$, $j, k = \overline{1, n}$.

\section{Eigenvalue asymptotics} \label{sec:eig}

In this section, we prove Theorems~\ref{thm:asympt} and~\ref{thm:dif} on the eigenvalue asymptotics. The proof of Theorem~\ref{thm:asympt} is based on the standard approach of Naimark \cite{Nai68}. An important difference from the regular case is the usage of the remainder estimates \eqref{estzeta} and the specific properties of the function $\Upsilon(\rho)$ given by Propositions~\ref{prop:Ups},~\ref{prop:nonc}.
In order to prove Theorem~\ref{thm:dif}, we analyze the difference between the eigenvalues of the two boundary value problems $\mathcal L$ and $\tilde {\mathcal L}$ by using the difference of the corresponding Birkhoff-type solutions (Corollary~\ref{cor:y}).

Consider the boundary value problem $\mathcal L$ for equation \eqref{eqv} with the boundary conditions \eqref{bc}. 
For $k = \overline{1, n}$, denote by $C_k(x, \la)$ the solution of equation \eqref{eqv} under the initial conditions $C_k^{[j-1]}(0, \la) = \de_{j,k}$, $j = \overline{1, n}$. The solutions $\{ C_k(x, \la) \}_{k = 1}^n$ form a FSS of \eqref{eqv}.
Therefore, the eigenvalues of the problem $\mathcal L$ coincide with the zeros of the characteristic function $\Delta(\la) = \det [U_s(C_k)]_{s,k = 1}^n$.

\begin{proof}[Proof of Theorem~\ref{thm:asympt}]
\textsc{Step 1. Expansion in the Birkhoff FSS.}
Fix a sector $\Gamma_{\kappa}$ with the property \eqref{order}. Then, for $\rho \in \Gamma_{\kappa, h}$, $|\rho| \ge \rho^*$, equation \eqref{eqv} with $\la = \rho^n$ has a FSS $\{ y_k(x, \rho) \}_{k = 1}^n$ from Proposition~\ref{prop:y}. 

Consider the matrix functions $C(x, \la) := [C_k^{[j-1]}(x, \la)]_{j,k = 1}^n$ and $Y(x, \rho) := [y_k^{[j-1]}(x, \rho)]_{j,k = 1}^n$. Obviously,
\begin{equation} \label{CYA}
C(x, \la) = Y(x, \rho) \mathcal A(\rho),
\end{equation}
where $\mathcal A(\rho)$ is an $(n \times n)$ matrix of coefficients. Consequently,
\begin{equation} \label{relDD}
\Delta(\la) = D(\rho) \det A(\rho), \quad D(\rho) := \det[U_s(y_k)]_{s,k = 1}^n.
\end{equation}

By virtue of Propositions~\ref{prop:Birk} and~\ref{prop:y}, 
\begin{equation} \label{asymptY}
Y(x, \rho) = \diag \{ 1, \rho, \ldots, \rho^{n-1} \} \Omega (I + \mathcal E(x, \rho)) \exp(\rho B x), 
\end{equation}
where $\mathcal E(x, \rho)$ satisfies \eqref{estE}. In particular, Proposition~\ref{prop:Ups} implies $\mathcal E(0, \rho) \to 0$ as $|\rho| \to \iy$, $\rho \in \overline{\Gamma}_{\kappa, h}$.
Consequently, using \eqref{CYA}, \eqref{asymptY}, the initial condition $C(0,\la) = I$, we derive
\begin{equation} \label{asymptA}
\det \mathcal A(\rho) = (\det \Omega)^{-1} \rho^{-n(n-1)/2} (1 + o(1)), \quad |\rho| \to \iy, \quad \rho \in \overline{\Gamma}_{\kappa, h}.
\end{equation}
Hence, for sufficiently large $|\rho|$, we have $\det \mathcal A(\rho) \ne 0$. 

Consider values of $\rho \in \Gamma_{\kappa,h}$ with sufficiently large $|\rho|$. In view of \eqref{relDD},
a number $\la = \rho^n$ is a zero the characteristic function $\Delta(\la)$ if and only if $\rho$ is a zero of $D(\rho)$.

\medskip

\textsc{Step 2. Asymptotics of $D(\rho)$}. Introduce the notation $[1] = 1 + \eps(\rho)$, where $\eps(\rho)$ can denote various functions satisfying
\begin{equation} \label{defeps}
|\eps(\rho)| \le C (\Upsilon(\rho) + |\rho|^{-1}), \quad \rho \in \overline{\mathcal G}.
\end{equation}
Substituting \eqref{asympty} into \eqref{bc} and taking \eqref{estzeta} into account, we obtain
$$
U_s(y_k) = \left\{ \begin{array}{ll} 
     (\rho \om_k)^{p_s}[1], \quad & s \le r, \\
     (\rho \om_k)^{p_s} \exp(\rho \om_k) [1], \quad & s > r.
     \end{array}\right.
$$
Thus
\begin{equation} \label{relDd}
D(\rho) = \rho^p
\begin{vmatrix}
\om_1^{p_1}[1] & \om_2^{p_1}[1] & \dots & \om_n^{p_1}[1] \\
\hdotsfor{4} \\
\om_1^{p_r}[1] & \om_2^{p_r}[1] & \dots & \om_n^{p_r}[1] \\
\om_1^{p_{r+1}}\exp(\rho \om_1)[1] & \om_2^{p_{r+1}} \exp(\rho \om_2)[1] & \dots & \om_n^{p_{r+1}} \exp(\rho \om_n)[1] \\
\hdotsfor{4} \\
\om_1^{p_n} \exp(\rho \om_1)[1] & \om_2^{p_n} \exp(\rho \om_2)[1] & \dots & \om_n^{p_n} \exp(\rho \om_n)[1]
\end{vmatrix}, \quad p := \sum_{s = 1}^n p_s.
\end{equation}

For definiteness, consider the case when $(n - r)$ is even and $\kappa = 1$. The other cases can be treated similarly. It is worth noting that one has to study two neighbouring sectors $\Gamma_{\kappa}$ and $\Gamma_{\kappa + 1}$ in order their images cover the whole $\la$-plane. By analyzing the asymptotics of $D(\rho)$ as $|\rho| \to \iy$, one can show that all its zeros in $\Gamma_{1,h}$ for sufficiently large $|\rho|$ lie in the strip 
\begin{equation} \label{defSR}
\mathcal S_R := \{ \rho \colon \mbox{Re}\, \rho > 0, \, |\mbox{Im}\,\rho| < R \} \subset \Gamma_{1,h},
\end{equation}
if $h$ and $R$ are chosen to be sufficiently large (see Fig.~\ref{img:sectors2}).

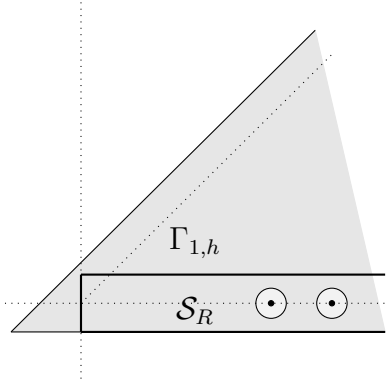
\begin{figure}[h!]
\centering
\begin{tikzpicture}
\filldraw[gray!20] (-0.92, -0.38) -- (4, -0.38) -- (3.08, 3.62) -- cycle;
\filldraw[gray!10] (2.5, 0) circle (0.2);
\filldraw[gray!10] (3.3, 0) circle (0.2);
\draw[dotted] (-1, 0) edge (4, 0);
\draw[dotted] (0, -1) edge (0, 4);
\draw[dotted] (0, 0) edge (3.3, 3.3);
\draw (1.5, 0.8) node{$\Gamma_{1,h}$};
\draw (-0.92, -0.38) edge (4, -0.38);
\draw (-0.92, -0.38) edge (3.08, 3.62);
\draw[thick] (0, -0.38) edge (4, -0.38);
\draw[thick] (0, 0.38) edge (4, 0.38);
\draw[thick] (0, -0.38) edge (0, 0.38);
\draw (1.5, -0.1) node{$\mathcal S_R$};
\filldraw (2.5, 0) circle (1pt);
\filldraw (3.3, 0) circle (1pt);
\draw(2.5, 0) circle (0.2);
\draw (3.3, 0) circle (0.2);
\end{tikzpicture}
\caption{Regions for the proof of Theorem~\ref{thm:asympt}}
\label{img:sectors2}
\end{figure}

For $\rho \in \mathcal S_R$, we have the asymptotics
\begin{gather} \label{relD}
D(\rho) = \rho^p \exp(\rho \om) D_1(\rho), \\ \label{relD1}
D_1(\rho) = D_1^0(\rho) + \eps(\rho), \quad
D_1^0(\rho) := (c_1 - c_2 \exp(\rho (\om_r - \om_{r+1}))), \\ \nonumber
\om := \sum_{k = r + 1}^n \om_k, \quad
c_1 := \det[\om_k^{p_s}]_{s,k = 1}^r \cdot \det[\om_k^{p_s}]_{s,k = r+1}^n \ne 0, \\ \nonumber  c_2 := \det[\om_k^{p_s}]_{s = \overline{1,r}, k = \overline{1, r-1},r+1} \cdot \det[\om_k^{p_s}]_{s = \overline{r+1,n}, k = r, \overline{r+2,n}} \ne 0.
\end{gather}

\medskip

\textsc{Step 3. Asymptotics of zeros for large $|\rho|$}.
Clearly, the zeros of $D_1^0(\rho)$ have the form
$$
\rho_l^0 = \frac{\pi}{\sin\tfrac{\pi r}{n}} (l + \chi), \quad \chi := -\frac{1}{2 \pi i} \log (c_1 / c_2), \quad l \in \mathbb Z.
$$
By using \eqref{relD}, \eqref{relD1} and the standard method based on Rouche's Theorem, we conclude that the zeros of $D_1(\rho)$ and of $D(\rho)$ for sufficiently large $|\rho|$ are simple and have the asymptotics
\begin{equation} \label{smrhol}
\rho_l = \frac{\pi}{\sin\tfrac{\pi r}{n}} (l + \chi + \eps_l), \quad \eps_l = o(1), \quad l \to +\infty.
\end{equation}

Substituting \eqref{smrhol} into the relation $D_1(\rho_l) = 0$ and using \eqref{relD1}, we derive $\eps_l = \eps(\rho_l)$, where $\eps(\rho)$ satisfies \eqref{defeps}. Clearly, $\{ \rho_l \}$ is a non-condensing sequence in $\mathcal G$. Recall that $A_0 \in L_2(0, 1)$. It follows from \eqref{defeps} and Proposition~\ref{prop:nonc} with $\mu = 2$ that $\{ \eps_l \}\in l_2$. 
In this way, we consider the two neighbouring sectors $\Gamma_1$ and $\Gamma_2$.
Returning to the $\la$-plane, we conclude that, in our case, the eigenvalues have the form $\la_l = \rho_l^n$, $l \ge l_0$, where $\rho_l$ satisfy \eqref{smrhol}. It remains to prove that $l_0$ depends only on $n$, $r$, and $(p_s)_{s = 1}^n$.

\medskip

\textsc{Step 4. Estimate of $|\Delta(\la)|$ from below}.
Consider the region
$$
\mathcal G_{\de} := \{ \rho \in \mathcal G \colon |\rho - \rho_l| \ge \de, \, l \ge l_0 \}, \quad \de > 0.
$$
Using \eqref{relDd}--\eqref{smrhol}, we obtain the estimate
\begin{equation} \label{Db}
|D(\rho)| \ge C_{\de} |\rho|^p \exp (\mbox{Re}\,(\rho \om)), \quad \rho \in \mathcal G_{\de}, 
\end{equation}
where $C_{\de}$ is a constant depending on $\de$ and $\mathcal G$.
By using \eqref{asymptA} and \eqref{Db}, we get
\begin{equation} \label{Deltab}
|\Delta(\rho^n)| \ge C_{\de} |\rho|^{p - n(n-1)/2} \exp (\mbox{Re}\,(\rho \om)), \quad \rho \in \mathcal G_{\de}.
\end{equation}

\medskip

\textsc{Step 5. Difference $(\Delta(\la) - \Delta^0(\la))$.}
Consider the problem $\mathcal L^0$ of the same form as $\mathcal L$ with the zero coefficients $\sigma_{\nu}^0 = 0$, $\nu = \overline{0, n-2}$, $u_{s,j}^0 = 0$, $j = \overline{1, p_s}$, $s = \overline{1,n}$. Let $\Delta^0(\lambda)$ be the characteristic function of $\mathcal L^0$. By using the formulas \eqref{relDD} and \eqref{relDd}, we obtain
\begin{equation} \label{DD0}
\Delta(\rho^n) - \Delta^0(\rho^n) = o\left( \rho^{p - n(n-1)/2} \exp(\rho \om) \right), \quad \rho \in \overline{\Gamma}_{\kappa,h}, \quad |\rho| \to \iy.
\end{equation}
Combining \eqref{Deltab} and \eqref{DD0}, we get
\begin{equation} \label{ineqD}
|\Delta(\la) - \Delta^0(\la)| < |\Delta(\la)|, 
\end{equation}
for $\la = \rho^n$, $\rho \in \mathcal G_{\de}$, and sufficiently large $|\rho|$. Clearly, the inequality \eqref{ineqD} can be obtained for the two neighbouring sectors $\Gamma_{\kappa,h}$ and $\Gamma_{\kappa + 1,h}$, whose images cover the whole $\la$-plane. Consequently, \eqref{ineqD} holds on some contour $\{ \la \colon |\la| = R \}$ with a sufficiently large $R$. Since the functions $\Delta(\la)$ and $\Delta^0(\la)$ are entire in $\la$, then, by virtue of Rouche's Theorem, these two functions have the same number of zeros in the circle $\{ \la \colon |\la| < R \}$. Thus, the numeration of the zeros $\{ \la_l \}$ and $\{ \la_l^0 \}$ of $\Delta(\la)$ and $\Delta^0(\la)$, respectively, starts from the same index $l_0$. The shift of numeration leads to \eqref{asymptla}.
\end{proof}

Further, we need the following technical lemma.

\begin{lem} \label{lem:Ddot}
Let $D_1(\rho)$ be a function of form \eqref{relD1}, where $\eps(\rho)$ is analytic function in $\mathcal G$ satisfying \eqref{defeps}, and $\{ \rho_l \}_{l \ge l_0} \subset \mathcal G$ be an arbitrary sequence of form \eqref{smrhol} with $\{ \eps_l \} \in l_2$. Then
$$
\dot D_1(\rho_l) = c + \varkappa_l, \quad c \in \mathbb C, \quad \{ \varkappa_l \} \in l_2,
$$
where $\dot D_1(\rho) = \tfrac{d}{d\rho} D_1(\rho)$.
\end{lem}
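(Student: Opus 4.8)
The plan is to differentiate the explicit formula $D_1(\rho) = c_1 - c_2 \exp(\rho(\om_r - \om_{r+1})) + \eps(\rho)$ term by term and then evaluate at $\rho = \rho_l$. Differentiating, $\dot D_1(\rho) = -c_2 (\om_r - \om_{r+1}) \exp(\rho(\om_r - \om_{r+1})) + \dot\eps(\rho)$. Since $\rho_l$ is a zero of $D_1$, the relation $D_1(\rho_l) = 0$ gives $c_2 \exp(\rho_l(\om_r - \om_{r+1})) = c_1 + \eps(\rho_l)$, so the first term equals $-(\om_r - \om_{r+1})(c_1 + \eps(\rho_l)) = -(\om_r-\om_{r+1})c_1 - (\om_r-\om_{r+1})\eps(\rho_l)$. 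Setting $c := -(\om_r - \om_{r+1}) c_1$, it remains to show that $\eps(\rho_l)$ and $\dot\eps(\rho_l)$ both lie in $l_2$ (as sequences in $l$). The first of these is given: substituting \eqref{smrhol} into $D_1(\rho_l)=0$ already yielded $\eps_l = \eps(\rho_l) \in l_2$ in the proof of Theorem~\ref{thm:asympt}. So the crux is the derivative term $\dot\eps(\rho_l)$.

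For $\dot\eps(\rho_l)$ I would use a Cauchy-integral (Montel / Cauchy estimate) argument. The function $\eps(\rho)$ is analytic in $\mathcal G$ and satisfies the pointwise bound \eqref{defeps}, namely $|\eps(\rho)| \le C(\Upsilon(\rho) + |\rho|^{-1})$. For each $l$, apply Cauchy's formula for the derivative on a small circle of fixed radius $r_0$ (say $r_0 = \tfrac12$, small enough that the disc stays inside $\mathcal G$ for $l$ large) centered at $\rho_l$:
$$
\dot\eps(\rho_l) = \frac{1}{2\pi i} \oint_{|\rho - \rho_l| = r_0} \frac{\eps(\rho)}{(\rho - \rho_l)^2}\, d\rho,
$$
whence $|\dot\eps(\rho_l)| \le r_0^{-1} \max_{|\rho - \rho_l| = r_0} |\eps(\rho)| \le C \max_{|\rho - \rho_l| = r_0}(\Upsilon(\rho) + |\rho|^{-1})$. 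Since the points $\rho_l$ have spacing bounded below (they are $\sim \frac{\pi}{\sin(\pi r/n)} l$ apart, being of form \eqref{smrhol}), one can pick finitely many auxiliary points $\rho_{l}^{(1)}, \dots, \rho_l^{(m)}$ on each circle $|\rho-\rho_l|=r_0$, with $m$ independent of $l$, such that the maximum of $\Upsilon + |\cdot|^{-1}$ over the circle is attained (up to a constant, using that $\Upsilon$ does not oscillate too wildly — or simply bound $\Upsilon(\rho)\le \Upsilon(\rho_l^{(j)})$ for the appropriate $j$ by monotonicity-type properties) by $\max_j (\Upsilon(\rho_l^{(j)}) + |\rho_l^{(j)}|^{-1})$. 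Each of the $m$ sequences $\{\rho_l^{(j)}\}_{l\ge l_0}$ is non-condensing in $\mathcal G$, so by Proposition~\ref{prop:nonc} with $\mu = 2$ (recall $A_0 \in L_2(0,1)$) each $\{\Upsilon(\rho_l^{(j)})\}_l \in l_2$; and $\{|\rho_l^{(j)}|^{-1}\}_l \in l_2$ trivially since $|\rho_l^{(j)}| \sim C l$. A finite sum of $l_2$ sequences is $l_2$, hence $\{\dot\eps(\rho_l)\}_l \in l_2$.

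Combining, $\dot D_1(\rho_l) = c + \big(-(\om_r-\om_{r+1})\eps(\rho_l) + \dot\eps(\rho_l)\big)$ with the bracketed term in $l_2$, which gives the claim with $\varkappa_l := -(\om_r-\om_{r+1})\eps(\rho_l) + \dot\eps(\rho_l)$. The main obstacle is the bookkeeping in the second paragraph: one must be careful that the Cauchy-estimate radius can be chosen uniformly so that the circles lie inside $\mathcal G$ and do not collide (true for $l$ large, by the asymptotic spacing of the $\rho_l$), and that passing from the continuous bound \eqref{defeps} on a circle to a bound at finitely many sample points — to which Proposition~\ref{prop:nonc} applies — does not lose the $l_2$ property. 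The cleanest way around the sampling subtlety is to note that $\Upsilon(\rho)$ is, by its definition \eqref{defUps1} as a sup of integrals of exponentials bounded on $\overline{\Gamma}_{\kappa,h}$, Lipschitz in $\rho$ with a constant growing at most polynomially, so its maximum over a fixed-radius circle around $\rho_l$ is comparable to its value at the finitely many equally-spaced sample points on that circle; alternatively, one absorbs a slightly enlarged sector $\Gamma_{\kappa, h'}$ with $h' > h$ so that all such circles are automatically inside, and invokes that Proposition~\ref{prop:nonc} holds on any such sector. Modulo this routine care, the argument is complete.
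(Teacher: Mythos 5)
Your overall route is the same as the paper's: split $\dot D_1(\rho_l)=\dot D_1^0(\rho_l)+\dot\eps(\rho_l)$, handle $\dot\eps(\rho_l)$ by Cauchy's formula on small circles around $\rho_l$, and feed auxiliary points on those circles into Proposition~\ref{prop:nonc}. However, your treatment of the main term has a genuine gap: you evaluate $-c_2(\om_r-\om_{r+1})\exp(\rho_l(\om_r-\om_{r+1}))$ by invoking $D_1(\rho_l)=0$, but this is not among the hypotheses of the lemma. The lemma is stated for an \emph{arbitrary} sequence $\{\rho_l\}$ of form \eqref{smrhol} with $\{\eps_l\}\in l_2$, and it is needed in that generality later (in the proofs of Theorems~\ref{thm:w1} and~\ref{thm:w2} the same statement and technique are applied to functions such as $D_1^{\bullet}$ evaluated at the zeros of $D_1$, which are not zeros of $D_1^{\bullet}$). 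The repair is short and avoids the zero assumption altogether: since in the case under consideration $\om_r-\om_{r+1}$ is purely imaginary with $|\om_r-\om_{r+1}|=2\sin\tfrac{\pi r}{n}$, the quantity $\exp(\rho_l^0(\om_r-\om_{r+1}))$ with $\rho_l^0=\tfrac{\pi}{\sin(\pi r/n)}(l+\chi)$ does not depend on $l$ (and equals $c_1/c_2$ by the choice of $\chi$), so \eqref{smrhol} gives $\exp(\rho_l(\om_r-\om_{r+1}))=\tfrac{c_1}{c_2}(1+O(\eps_l))$ and hence $\dot D_1^0(\rho_l)=c+\varkappa_l$ with $\{\varkappa_l\}\in l_2$, directly from the asymptotic form of $\rho_l$. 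For the same reason, $\{\eps(\rho_l)\}\in l_2$ should be obtained from \eqref{defeps} and Proposition~\ref{prop:nonc} applied to the non-condensing sequence $\{\rho_l\}$, not by citing the identity $\eps_l=\eps(\rho_l)$ from the proof of Theorem~\ref{thm:asympt}, which again presupposes $D_1(\rho_l)=0$.

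On the Cauchy-estimate step, your core idea is right, but the justifications you offer for passing from the circle maximum to sample points are not sound: $\Upsilon$ has no monotonicity property to invoke, and a Lipschitz bound (even with a uniformly bounded constant) only compares the maximum with finitely many fixed samples up to an additive error of the order of the sample spacing, which does not decay in $l$ and therefore destroys the $l_2$ conclusion. The device that actually works — and the one the paper uses — is the option you mention only in passing: for each $l$ take the \emph{single} point $\rho_l^{\diamond}$ on the circle $|\rho-\rho_l|=\de$ at which $|\eps|$ attains its maximum (it exists by continuity). Then $\{\rho_l^{\diamond}\}$ is one non-condensing sequence in $\mathcal G$ (choose $\de$ small, or enlarge $h$ and $\rho^*$ slightly so that the circles stay inside the region), and \eqref{defeps} together with Proposition~\ref{prop:nonc} with $\mu=2$ gives $\{\eps(\rho_l^{\diamond})\}\in l_2$, hence $\{\dot\eps(\rho_l)\}\in l_2$; no sampling or Lipschitz argument is needed. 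With these two repairs your argument coincides with the paper's proof.
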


\begin{proof}
It follows from \eqref{relD1} that
$$
\dot D_1(\rho_l) = \dot D_1^0(\rho_l) + \dot \eps(\rho_l).
$$
Obviously, $\dot D_1(\rho_l) = c + \varkappa_l$. The Cauchy formula yields
$$
\dot \eps(\rho_l) = \frac{1}{2\pi i} \int\limits_{|\rho - \rho_l| = \de} \frac{\eps(\rho)}{(\rho - \rho_l)^2} \, d\rho,
$$
where $\de > 0$ is so small that $\{ \rho \colon |\rho - \rho_l| = \de \} \subset \mathcal G$. Hence
$$
|\dot \eps(\rho_l)| \le \de^{-1} \max\limits_{|\rho - \rho_l| = \de} |\eps(\rho)|.
$$
Denote by $\{ \rho_l^{\diamond} \}$ the points such that
$$
|\rho_l^{\diamond} - \rho_l| = \de, \quad
|\eps(\rho_l^{\diamond})| = \max\limits_{|\rho - \rho_l| = \de} |\eps(\rho)|. 
$$
Clearly, $\{ \rho_l^{\diamond} \}_{l \ge l_0}$ is a non-condensing sequence in $\mathcal G$. Therefore, it follows from \eqref{defeps} and Proposition~\ref{prop:nonc} that $\{ \eps(\rho_l^{\diamond}) \} \in l_2$. Consequently, $\{ \dot \eps(\rho_l) \} \in l_2$. This completes the proof.
\end{proof}

\begin{proof}[Proof of Theorem~\ref{thm:dif}]
Suppose that the problems $\mathcal L$ and $\tilde{\mathcal L}$ satisfy the conditions of Theorem~\ref{thm:dif}, that is, $\sigma_{\nu}(x) = \tilde \sigma_{\nu}(x)$ a.e. on $(0,1)$, $\nu = \overline{\nu_0, n-2}$, $u_{s,p_s-j} = \tilde u_{s,p_s -j}$, $j = \overline{0, d-2}$, $d := n - 1 - \max\limits_{\nu = \overline{0, \nu_0-1}} (\nu + i_{\nu})$. By virtue of Theorem~\ref{thm:asympt}, the eigenvalues have the form $\la_l = (-1)^{n-r} \rho_l^n$ and $\tilde \la_l = (-1)^{n-r} \tilde \rho_l^n$, $l \ge 1$, where $\rho_l$ and $\tilde \rho_l$ have the asymptotics \eqref{smrhol} with $\chi = \tilde \chi$.
For definiteness, consider the case of even $(n-r)$ and $\kappa = 1$. According to the proof of Theorem~\ref{thm:asympt}, the numbers $\rho_l$ and $\tilde \rho_l$ for sufficiently large $l$ are the zeros of the functions $D_1(\rho)$ and $\tilde D_1(\rho)$, respectively, defined by \eqref{relD1}. In order to estimate $\hat \rho_l$, we analyze the difference $\hat D_1(\rho)$.

Using the conditions of Theorem~\ref{thm:dif},  Corollary~\ref{cor:y}, and \eqref{bc}, we obtain
\begin{equation} \label{difU}
U_s(y_k) - \tilde U_s(\tilde y_k) = \left\{ \begin{array}{ll}
                                        (\rho \om_k)^{p_s} \rho^{-d} (\hat c_{sk} + \hat \eps(\rho)), \quad & s \le r, \\
                                        (\rho \om_k)^{p_s}\exp(\rho \om_k)\rho^{-d} (\hat c_{sk} + \hat \eps(\rho)), \quad & s > r.
                                    \end{array} \right.
\end{equation}
Here and below in this proof, we denote by $\hat \eps(\rho)$ various functions satisfying
$$
|\hat \eps(\rho)| \le C (\Upsilon(\rho) + \Upsilon_d(\rho) + |\rho|^{-1}),
$$
and by $\hat c$ with and without indices constants depending on the values
\begin{equation} \label{thu}
\int_0^1 \hat \theta_{jk}(t) \, dt, \quad j = \overline{0,n-1}, \, k = \overline{1, n}, \quad \text{and} \quad \hat u_{s,p_s-d+1}, \quad s = \overline{1, n},
\end{equation}
where $\theta_{jk}(t)$ are the functions from \eqref{estzeta}.

Repeating the arguments of Step~2 in the proof of Theorem~\ref{thm:asympt}, we obtain
\begin{equation} \label{D1h}
\hat D_1(\rho)= \rho^{-d} (\hat c_1 - \hat c_2 \exp(\rho(\om_r - \om_{r+1})) + \hat \eps(\rho)), \quad \rho \in \mathcal S_R,
\end{equation}
for sufficiently large $|\rho|$.

It follows from $D_1(\rho_l) = 0$ and $\tilde D_1(\tilde \rho_l) = 0$ that
\begin{equation} \label{sm1}
D_1(\rho_l) - D_1(\tilde \rho_l) + \hat D_1(\tilde \rho_l) = 0.
\end{equation}
The complex Taylor formula implies
\begin{equation} \label{sm2}
D_1(\tilde \rho_l) - D_1(\rho_l) =  \dot D_1(\rho_l)(\tilde \rho_l - \rho_l) + \mathcal R(\rho_l, \tilde \rho_l),
\end{equation}
where 
$$
\mathcal R(\rho_l, \tilde \rho_l) = \frac{(\tilde \rho_l - \rho_l)^2}{2\pi i} \int\limits_{|\rho - \rho_l| = \de} \frac{D_1(w) \, dw}{(w - \rho_l)^2 (w - \tilde \rho_l)}.
$$
Using \eqref{relD1} and \eqref{smrhol}, one can easily show that
$$
|\mathcal R(\rho_l, \tilde \rho_l)| \le C |\rho_l - \tilde \rho_l|^2,
$$
for sufficiently large $l$. The latter estimate together with \eqref{sm1}, and \eqref{sm2} imply
\begin{equation} \label{sm3}
\rho_l - \tilde \rho_l = \hat D_1(\tilde \rho_l) (\dot D_1(\rho_l) + O(\rho_l - \tilde \rho_l)).
\end{equation}

Let us estimate the right-hand side of \eqref{sm3}.
According to Proposition~\ref{prop:Ups}, $\hat \eps(\rho) \to 0$ as $|\rho| \to \iy$, $\rho \in \mathcal S_R$. Therefore, \eqref{D1h} and \eqref{smrhol} imply 
\begin{equation} \label{D1rho}
\hat D_1(\tilde \rho_l) = l^{-d} (\hat c_0 + \eta_l), \quad \eta_l = o(1), \quad l \to \iy.
\end{equation}

Using \eqref{sm3}, \eqref{D1rho}, and Lemma~\ref{lem:Ddot}, we derive the asymptotics
$$
\rho_l - \tilde \rho_l = l^{-d} (\hat c + \de_l), \quad \de_l = o(1), \quad l \to \iy.
$$

Consider the following special cases.

\smallskip

(i) Suppose that $\int_0^1 \hat \sigma_{\nu}(x) \, dx = 0$, $\nu \in \mathcal N_d^0$, and $\hat u_{s,p_s-d+1} = 0$, $s = \overline{1, n}$. Then, according to Corollary~\ref{cor:y}, all the values \eqref{thu} equal zeros, so $\hat c = 0$.

\smallskip

(ii) If $\sigma_{\nu} \in L_2(0,1)$, $\nu \in \mathcal N_d$, then
$A_d \in L_2(0,1)$. Applying Proposition~\ref{prop:nonc} to $\Upsilon(\rho)$ and $\Upsilon_d(\rho)$, we obtain $\{ \de_l \} \in l_2$.

\smallskip

The proof is complete.
\end{proof}

\begin{proof}[Proof of Corollary~\ref{cor:odd}]
Let us apply Theorem~\ref{thm:dif} to an arbitrary problem $\mathcal L$ with $n = 2m+1$, $\tilde{\mathcal L} = \mathcal L^0$ and $\nu_0 = n-1$. The inequality $i_{2k+j} \le m - k - j$ implies that the minimal value of $d$ equals $1$. In other words, the main diagonal in the corresponding matrix $F(x)$ always equal zero in the odd case. We have $i_{n-2} = 0$ and $i_{n-3} \in \{ 0, 1 \}$. For $i_{n-3} = 0$, we have $\mathcal N_d = \mathcal N_d^0 = \{ (n - 2)\}$, and for $i_{n-3} = 1$, $\mathcal N_d = \{ (n-3), (n-2) \}$, $\mathcal N_d^0 = \{ (n-2) \}$. Thus, Theorem~\ref{thm:dif} immediately yields the claim.
\end{proof}

\section{Examples} \label{sec:examp}

This section illustrates the application of Theorems~\ref{thm:asympt} and~\ref{thm:dif} to various classes of differential operators with distribution coefficients.

\begin{example}
Suppose that $n = 3$, $i_0 = 1$, $i_1 = 0$, $\sigma_{\nu} \in L_1(0,1)$, $\nu = 0, 1$. Then the differential expression \eqref{defl} takes the form
$$
\ell_3(y) = y^{(3)} - (\sigma_1(x) y)' - \sigma_1(x) y' - \sigma'_0(x) y, \quad x \in (0,1),
$$
and the associated matrix equals
\begin{equation*} 
F(x) = \begin{bmatrix}
            0 & 1 & 0 \\
            (\sigma_0 + \sigma_1) & 0 & 1 \\
            0 & -(\sigma_0 - \sigma_1) & 0
        \end{bmatrix}.
\end{equation*}
Clearly, 
\begin{equation} \label{F3}
F_0(x) \equiv 0, \quad
F_1(x) = \begin{bmatrix}
            0 & 0 & 0 \\
            (\sigma_0 + \sigma_1) & 0 & 0 \\
            0 & -(\sigma_0 - \sigma_1) & 0
        \end{bmatrix}.
\end{equation}

Consider the sector $\Gamma_1 = \{ \rho \colon 0 < \arg \rho < \pi/3 \}$. Then $\om_1 = \exp(-2\pi \mathrm{i}/3)$, $\om_2 = \exp(2\pi \mathrm{i}/3)$, $\om_3 = 1$. It follows from \eqref{F3} that $A_0(x) \equiv 0$ and
$$
\diag(A_1(x)) = \diag(\Omega^{-1} F_1(x) \Omega) = \frac{2}{3} \sigma_1(x) \begin{bmatrix}
            \om_1^{-1} & 0 & 0 \\
            0 & \om_2^{-1} & 0 \\
            0 & 0 & \om_3^{-1}
        \end{bmatrix}.
$$

Applying Theorem~\ref{thm:Birk} to the systems \eqref{sysw} and \eqref{syswt} with $\tilde A(x,\rho) \equiv 0$ and $d = 1$, we obtain the following asymptotics for the fundamental solution matrix:
$$
w(x, \rho) = \biggl( I + \frac{2}{3 \rho} \int_0^x \sigma_1(t) \, dt \, B^{-1} +  \frac{\gamma(x, \rho)}{\rho}\biggr) \exp(\rho B x),
$$
where 
\begin{equation} \label{estga}
\max_x \| \gamma(x, \rho) \| \le C (\Upsilon(\rho) + \Upsilon_1(\rho) + |\rho|^{-1}), \quad \rho \in \overline{\mathcal G}.
\end{equation}
Passing to the FSS $\{ y_k(x, \rho) \}_{k = 1}^3$ of the equation $\ell_3(y) = \rho^n y$, we obtain the asymptotics
\begin{equation} \label{asympty3}
y_k^{[j]}(x, \rho) = (\rho \om_k)^j \exp(\rho \om_k x) \biggl( 1 + \frac{2}{3 \rho \om_k} \int_0^x \sigma_1(t) \, dt + \frac{\gamma_{jk}(x, \rho)}{\rho}\biggr), \quad k = \overline{1,n}, \, j = \overline{0,n-1},
\end{equation}
where scalar functions $\gamma_{jk}(x,\rho)$ satisfy the same estimate \eqref{estga} as the matrix function $\gamma(x, \rho)$.

Consider the differential equation $\ell_3(y) = \la y$ with the following boundary conditions $(r = 1)$:
$$
y(0) = 0, \quad y(1) = 0, \quad y^{[1]}(1) = 0.
$$
By virtue of Theorem~\ref{thm:asympt} and Corollary~\ref{cor:odd}, the eigenvalues of this problem have the asymptotics
\begin{equation} \label{asymptla3}
\la_l = \left( \frac{2\pi}{\sqrt 3} \biggl( l + \chi + \frac{\chi_1}{l} +  \frac{\delta_l}{l}\biggr)\right)^n, \quad \de_l = o(1), \quad l \to \iy,
\end{equation}
where $\chi_1$ depends on $\int_0^1 \sigma_1(x) \, dx$ and, if $\sigma_{\nu} \in L_2(0,1)$, $\nu = 0,1$, then $\{ \de_l \} \in l_2$.

The function $D(\rho)$ defined by \eqref{relDD} has the form
\begin{equation} \label{D3}
D(\rho) = \begin{vmatrix}
            y_1(0,\rho) & y_2(0, \rho) & y_3(0, \rho) \\
            y_1(1,\rho) & y_2(1,\rho) & y_3(1,\rho) \\
            y_1^{[1]}(1,\rho) & y_2^{[1]}(1,\rho) & y_3^{[1]}(1,\rho)
          \end{vmatrix}.
\end{equation}
Substituting the asymptotics \eqref{asympty3} into \eqref{D3} and finding the asymptotics of the zeros $\{ \rho_l \}$ of $D(\rho)$ in the strip $S_R$, we obtain the values 
$$
\chi = \frac{1}{6}, \quad \chi_1 = \frac{1}{\pi^2} \int_0^1 \sigma_1(x) \, dx
$$
of the constants in \eqref{asymptla3}.
\end{example}

In the next examples, for the sake of simplicity, we assume that $u_{s,j} = \tilde u_{s,j}$, $s = \overline{1,n}$, $j = \overline{1,p_s}$.

\begin{example} \label{ex:2}
Suppose that $n = 2m$, $i_{\nu} = 0$, $\sigma_{\nu} \in L_1(0,1)$, $\nu = \overline{0,n-2}$. Due to Definition~\ref{def:F}, the entries of the associated matrix $F(x) = [f_{k,j}(x)]_{k,j = 1}^n$ are given by the relations
\begin{align*}
& f_{n-k,k+1} = (-1)^{k+1} \sigma_{2k}, \quad k = \overline{0,m-1}, \\
& f_{n-k-1,k+1} = f_{n-k, k+2} = (-1)^k \sigma_{2k+1}, \quad k = \overline{0, m-2},
\end{align*}
and all the other entries are defined as $f_{k,j} = \de_{k+1,j}$. For instance,
$$
\ell_6(y) = y^{(6)} + (\sigma_4 y'')'' + [(\sigma_3 y'')' + (\sigma_3 y')''] - (\sigma_2 y')' - [(\sigma_1 y)' + \sigma_1 y'] + \sigma_0 y,
$$
and the associated matrix is
$$
\begin{tikzpicture}
     \matrix (mat) [matrix of nodes, left delimiter={[},right delimiter={]}]
      {%
        \:0\: & \:1\: & \:0\: & \:\:0\:\: & \:\:0\:\: & \:\:0\:\: \\
        0 & 0 & 1 & 0 & 0 & 0 \\
        0 & 0 & 0 & 1 & 0 & 0 \\
        0 & $-\sigma_3$ & $-\sigma_4$ & 0 & 1 & 0 \\
        $\sigma_1$ & $\sigma_2$ & $-\sigma_3$ & 0 & 0 & 1 \\
        $-\sigma_0$ & $\sigma_1$ & 0 & 0 & 0 & 0 \\
      };
      \draw[gray] (mat-2-1.center)  -- (mat-6-5.center);
      \draw[gray] (mat-3-1.center) -- (mat-6-4.center);
      \draw[gray] (mat-4-1.center) -- (mat-6-3.center);
      \draw[gray] (mat-5-1.center) -- (mat-6-2.center);
\end{tikzpicture}
$$

Clearly, for each $d = \overline{1,n-1}$, the $d$-th diagonal contains only $\sigma_{n-d-1}$. Since the main diagonal is zero, the remainder term $\eps_l$ of the asymptotics \eqref{asymptla} has the form \eqref{asymptodd}, similarly to the case of odd $n$.

Consider problems $\mathcal L$ and $\tilde{\mathcal L}$ such that $\sigma_{\nu}(x) = \tilde \sigma_{\nu}(x)$ a.e. on $(0,1)$ for $\nu = \overline{\nu_0, n-2}$, $\nu_0 \in \{ 1, \ldots, n-1 \}$. Then, in Theorem~\ref{thm:dif}, $d = n-\nu_0$, $\mathcal N_d = \mathcal N_d^0 = \{ (\nu_0-1) \}$. Hence 
$$
\hat \rho_l = l^{-(n - \nu_0)}(\hat c + \delta_l), \quad \delta_l = o(1),
$$
and the constant $\hat c$ linearly depends on $\int\limits_0^1 \hat \sigma_{\nu_0-1}(x) \, dx$. In addition, if $\hat \sigma_{\nu_0-1} \in L_2(0,1)$, then $\{ \delta_l \} \in l_2$.
\end{example}

\begin{example}
Suppose that $n = 2m$, $i_{\nu} = 1$, $\sigma_{\nu} \in L_2(0,1)$, $\nu = \overline{0,n-2}$, $\nu_0 \in \{ 1, \ldots, n-1\}$. For instance,
$$
\ell_4(y) = y^{(4)} + (\sigma_2' y')' + [(\sigma_1' y)' + \sigma_1' y'] - \sigma_0' y
$$
and the associated matrix equals
$$
F(x) = 
\begin{bmatrix}
        0 & 1 & 0 & 0 \\
        -\sigma_1 & -\sigma_2 & 1 & 0 \\
        (\sigma_0 - \sigma_1 \sigma_2) & -\sigma_2^2 & \sigma_2 & 1 \\
        -\sigma_1^2 & (-\sigma_0 - \sigma_1 \sigma_2) & \sigma_1 & 0
    \end{bmatrix}.
$$

Suppose that, for the problems $\mathcal L$ and $\tilde{\mathcal L}$, we have $\sigma_{\nu}(x) = \tilde \sigma_{\nu}(x)$ a.e. on $(0,1)$ for $\nu = \overline{\nu_0, n-2}$. Then $d = n - \nu_0 - 1$, $\mathcal N_d = \{ (\nu_0 - 1) \}$, $\mathcal N_d^0 = \varnothing$. Therefore, Theorem~\ref{thm:dif} implies $\hat \rho_l = l^{-(n-\nu_0-1)} \varkappa_l$, $\{ \varkappa_l \} \in l_2$.
\end{example}

\begin{example} \label{ex:4}
Consider the case of Mirzoev and Shkalikov \cite{MS16}: $n = 2m$, $i_{2k+j} = m-k-j$, $j \in \{ 0, 1 \}$, $\sigma_{\nu} \in L_2(0,1)$, $\nu = \overline{0,n-2}$. The structure of the associated matrix $F(x)$ is provided in \cite{MS16}. Suppose that for the problems $\mathcal L$ and $\tilde{\mathcal L}$, we have $\sigma_{\nu}(x) = \tilde \sigma_{\nu}(x)$ a.e. on $(0,1)$ for $\nu = \overline{2\nu_1, n-2}$. Then $d = m-\nu_1$, $\mathcal N_d = \{ (2\nu_1-2), (2\nu_1-1)\}$, $\mathcal N_d^0 = \varnothing$. Hence, Theorem~\ref{thm:dif} implies $\hat \rho_l = l^{-(m - \nu_1)} \varkappa_l$, $\{ \varkappa_l \} \in l_2$.
\end{example}

The cases similar to Examples~\ref{ex:2}--\ref{ex:4} can be considered for odd $n$.

\section{Asymptotics of weight numbers} \label{sec:weight}

In this section, we define the weight numbers $\{ \beta_l \}$ and obtain for them results analogous to Theorems~\ref{thm:asympt} and~\ref{thm:dif} for the eigenvalues.

Together with $U_s(y)$, $s = \overline{1, n}$, consider the linear form
$$
U_0(y) = y^{[p_0]}(0) + \sum_{j = 1}^{p_0} u_{0,j} y^{[j-1]}(0), \quad p_0 \ne p_s, \: s = \overline{1, r}.
$$
Denote by $\mathcal L^{\bullet}$ the boundary value problem for equation \eqref{eqv} with the boundary conditions $U_s(y) = 0$, $s = \overline{0, n} \setminus r$. The eigenvalues of $\mathcal L^{\bullet}$ coincide with the zeros of the characteristic functions $\Delta^{\bullet}(\la) := \det[U_s(C_k)]_{s = \overline{0,n}\setminus r, \, k = \overline{1,n}}$.

Define the weight numbers $\{ \beta_l \}$ as follows:
$$
\beta_l := \Res_{\la = \la_l} \frac{\Delta^{\bullet}(\la)}{\Delta(\la)}.
$$

By Theorem~\ref{thm:dif},  for sufficiently large $l$, the eigenvalues $\{ \la_l \}$ of the problem $\mathcal L$ are simple. Therefore,
\begin{equation} \label{bel}
\beta_l = \frac{\Delta^{\bullet}(\la_l)}{\tfrac{d}{d\la} \Delta(\la_l)}
\end{equation}
for such values of $l$. It is worth considering the weight numbers only for sufficiently large indices $l$.

\begin{example}
Let $n = 2$, $r = 1$, $p_1 = p_2 = 0$, $p_0 = 1$, $u_{0,1} = 0$. Then $\mathcal L$ and $\mathcal L^{\bullet}$ are the boundary value problems for the Sturm-Liouville equation 
\begin{equation} \label{StL}
y'' - q(x) y = \la y, \quad x \in (0,1), 
\end{equation}
with the boundary conditions $y(0) = y(1) = 0$ and $y^{[1]}(0) = y(1) = 0$, respectively. Hence 
$$
\beta_l = \frac{C_1(1, \la_l)}{\tfrac{d}{d\la} C_2(1, \la_l)},
$$
where $C_k(x, \la)$ are the solutions of \eqref{StL} under the initial conditions $C_k^{[j-1]}(0, \la) = \de_{j,k}$, $j,k = 1, 2$. One can easily show that $C_2(x, \la_l)$ are the eigenfunctions of $\mathcal L$ and 
\begin{equation} \label{all}
\beta_l = -\al_l^{-1}, \quad \al_l := \int\limits_0^1 C_2^2(x, \la_l) \, dx.
\end{equation}
For a real-valued potential $q \in L_2(0,1)$, the numbers $\{ \la_l, \al_l \}_{l \ge 1}$ are the classical spectral data of the inverse Sturm-Liouville problem (see, e.g., \cite{Mar77, FY01}). For the case of complex-valued $q \in L_2(0,1)$, the so-called generalized spectral data have been introduced in \cite{But07}. In the Dirichlet-Dirichlet case, the generalized weight numbers coincide with $\al_l$ defined by \eqref{all} for sufficiently large $l$ (see \cite{BSY13}).
\end{example}

\begin{thm} \label{thm:w1}
For sufficiently large $l$, the following relation holds:
\begin{equation} \label{asymptbeta}
\beta_l = l^{n - 1 + p_0 - p_r} (\beta^0 + \varkappa_l), \quad 
\{ \varkappa_l \} \in l_2,
\end{equation}
where the constant $\beta^0$ depends only on $n$, $r$, and $(p_s)_{s = 0}^n$.
\end{thm}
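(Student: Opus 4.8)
The plan is to reduce the weight-number asymptotics to the machinery already developed for the characteristic function $\Delta(\la)$ in the proof of Theorem~\ref{thm:asympt}. Throughout we work with $l$ large enough that $\la_l$ is a simple eigenvalue and the Birkhoff asymptotics apply. The starting point is \eqref{bel}. Just as $\Delta(\la) = D(\rho)\det\mathcal A(\rho)$ by \eqref{relDD}, expanding the columns of $[U_s(C_k)]_{s = \overline{0,n}\setminus r,\,k=\overline{1,n}}$ in the Birkhoff FSS $\{y_k\}$ via \eqref{CYA} yields $\Delta^{\bullet}(\la) = D^{\bullet}(\rho)\det\mathcal A(\rho)$ with the \emph{same} matrix $\mathcal A(\rho)$ (it encodes only the transition $\{C_k\}\to\{y_k\}$ and does not see the boundary forms), where $D^{\bullet}(\rho) := \det[U_s(y_k)]_{s = \overline{0,n}\setminus r,\,k=\overline{1,n}}$. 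Differentiating $\Delta(\la) = D(\rho)\det\mathcal A(\rho)$ in $\la = \rho^n$ at a simple zero $\rho_l$ of $D$ (so $D(\rho_l)=0$, and $\det\mathcal A(\rho_l)\ne 0$ by \eqref{asymptA}), the formula \eqref{bel} collapses to
\begin{equation*}
\beta_l = \frac{n\,\rho_l^{\,n-1}\,D^{\bullet}(\rho_l)}{\dot D(\rho_l)}.
\end{equation*}

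Next I would obtain the asymptotics of $D^{\bullet}$ and $\dot D$ at $\rho_l$. The forms entering $\Delta^{\bullet}$ differ from those entering $\Delta$ only in that the left-end form $U_r$ is replaced by the left-end form $U_0$; repeating Step~2 of the proof of Theorem~\ref{thm:asympt} (substitute \eqref{asympty} into \eqref{bc}, then expand the determinant as in \eqref{relDd}) gives, for $\rho\in\mathcal S_R$ and large $|\rho|$,
\begin{equation*}
D^{\bullet}(\rho) = \rho^{p^{\bullet}}\exp(\rho\om)\bigl(c_1^{\bullet} - c_2^{\bullet}\exp(\rho(\om_r - \om_{r+1})) + \eps(\rho)\bigr), \quad p^{\bullet} := p - p_r + p_0,
\end{equation*}
where $\om = \sum_{k=r+1}^n\om_k$ as before and $c_1^{\bullet}, c_2^{\bullet}$ are the analogues of $c_1, c_2$ with the left-row index set $\overline{1,r}$ replaced by $\overline{0,r-1}$ in the relevant determinantal factors. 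The crucial point is that the exponential $\exp(\rho(\om_r-\om_{r+1}))$ is unchanged, because the right-end forms are untouched. Dually, differentiating \eqref{relD} and using $D_1(\rho_l)=0$ gives $\dot D(\rho_l) = \rho_l^{\,p}\exp(\rho_l\om)\dot D_1(\rho_l)$, and Lemma~\ref{lem:Ddot} yields $\dot D_1(\rho_l) = c + \varkappa_l$ with $c = -(\om_r-\om_{r+1})c_1\ne 0$.

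It remains to extract the $l_2$ remainder. From $D_1(\rho_l) = c_1 - c_2\exp(\rho_l(\om_r-\om_{r+1})) + \eps(\rho_l) = 0$, together with $\{\eps(\rho_l)\}\in l_2$ (the $\{\rho_l\}$ are non-condensing, $A_0\in L_2$, Proposition~\ref{prop:nonc} with $\mu=2$), we obtain $\exp(\rho_l(\om_r-\om_{r+1})) = c_1/c_2 + \varkappa_l$; substituting this into the formula for $D^{\bullet}$ gives $D^{\bullet}(\rho_l) = \rho_l^{\,p^{\bullet}}\exp(\rho_l\om)(\beta^{00} + \varkappa_l)$ with $\beta^{00} := c_1^{\bullet} - c_2^{\bullet}c_1/c_2$. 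Plugging both asymptotics into the displayed expression for $\beta_l$, the exponentials cancel, $p^{\bullet}-p = p_0-p_r$, and (since $c\ne 0$)
\begin{equation*}
\beta_l = n\,\rho_l^{\,n-1+p_0-p_r}\,\frac{\beta^{00}+\varkappa_l}{c+\varkappa_l} = \rho_l^{\,n-1+p_0-p_r}\bigl(n\beta^{00}/c + \varkappa_l\bigr).
\end{equation*}
Since $n-1+p_0-p_r\ge 0$ and $\rho_l = \tfrac{\pi}{\sin(\pi r/n)}(l+\chi+\eps_l)$ with $\{\eps_l\}\in l_2$ and $\{1/l\}\in l_2$, one has $\rho_l^{\,n-1+p_0-p_r} = \bigl(\tfrac{\pi}{\sin(\pi r/n)}\bigr)^{n-1+p_0-p_r}l^{\,n-1+p_0-p_r}(1+\varkappa_l)$, which yields \eqref{asymptbeta}; as $c_1,c_2,c_1^{\bullet},c_2^{\bullet},c$ and the scaling constant are built solely from the $n$-th roots of unity $\om_k$ and the exponents $p_s$, the resulting $\beta^0$ depends only on $n$, $r$, $(p_s)_{s=0}^n$. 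The remaining sectors and the case of odd $n-r$ are treated exactly as in the proof of Theorem~\ref{thm:asympt}. I expect the determinant bookkeeping for $D^{\bullet}$ to be the main obstacle: one must verify carefully that swapping one left-end form affects only the algebraic coefficients $c_1^{\bullet},c_2^{\bullet}$ and not the exponential $\exp(\rho(\om_r-\om_{r+1}))$, since it is precisely the cancellation of that exponential through $D_1(\rho_l)=0$ that upgrades the $o(1)$ error into an $l_2$ remainder.
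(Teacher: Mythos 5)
Your proposal is correct and follows essentially the same route as the paper: both reduce \eqref{bel} via $\Delta^{\bullet}=D^{\bullet}\det\mathcal A$ to $\beta_l = n\rho_l^{\,n-1+p_0-p_r}D_1^{\bullet}(\rho_l)/\dot D_1(\rho_l)$, derive for $D^{\bullet}$ the analogue of \eqref{relD}--\eqref{relD1}, and control the remainders with Lemma~\ref{lem:Ddot} and Proposition~\ref{prop:nonc} (non-condensing $\{\rho_l\}$, $A_0\in L_2$). Your extra details (the value $c=-(\om_r-\om_{r+1})c_1\ne 0$ and the use of $D_1(\rho_l)=0$ to evaluate $\exp(\rho_l(\om_r-\om_{r+1}))$) just make explicit what the paper leaves implicit.
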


\begin{proof}
For definiteness, consider the case of even $(n-r)$. Recall that the eigenvalues of $\mathcal L$ have the form $\la_l = \rho_l^n$, where $\{ \rho_l \}$ for sufficiently large $l$ belong to $\mathcal S_R$ and fulfill \eqref{smrhol}.

Similarly to the proof of Theorem~\ref{thm:asympt}, we obtain the formulas for $\rho \in \mathcal S_R$:
\begin{gather} \label{relDDb}
\Delta^{\bullet}(\la) = D^{\bullet}(\rho) \det \mathcal A(\rho), \quad D^{\bullet}(\rho) = \det[U_s(C_k)]_{s = \overline{0,n}\setminus r, \, k = \overline{1, n}},  \\ \label{relDb}
D^{\bullet}(\rho) = \rho^{p - p_r + p_0} \exp(\rho \om) D^{\bullet}_1(\rho), \\ \label{relD1b}
D^{\bullet}_1(\rho) = D^{\bullet,0}_1(\rho) + \eps(\rho), \quad
D^{\bullet,0}_1(\rho) = c^{\bullet}_1 - c^{\bullet}_2 \exp(\rho(\om_r - \om_{r+1})),
\end{gather}
where $\eps(\rho)$ is a function satisfying \eqref{defeps}, not necessarily equal to $\eps(\rho)$ in \eqref{relD1}, and $c^{\bullet}_1$, $c^{\bullet}_2$ are some constants different from $c_1$, $c_2$. Substituting \eqref{relDD}, \eqref{relD}, \eqref{relDDb}, \eqref{relD1b} into \eqref{bel}, we derive
\begin{equation} \label{beD1}
\beta_l = \frac{n \rho_l^{n-1} D^{\bullet}(\rho_l)}{\frac{d}{d\rho} D(\rho_l)} = n \rho_l^{n - 1 + p_0 - p_r} \frac{D^{\bullet}_1(\rho_l)}{\frac{d}{d\rho} D_1(\rho_l)}.
\end{equation}
Using \eqref{relD1}, \eqref{smrhol}, Lemma~\ref{lem:Ddot} for $D_1^{\bullet}(\rho_l)$ and taking into account that $\{ \eps(\rho_l) \} \in l_2$, we obtain 
$$
D^{\bullet}_1(\rho_l) = s_1 + \varkappa_{l,1}, \quad 
\tfrac{d}{d\rho} D_1(\rho_l) = s_2 + \varkappa_{l,2}, 
$$
where $s_1$, $s_2 \ne 0$ are constants and $\{ \varkappa_{l,1} \}, \{ \varkappa_{l,2} \} \in l_2$. Hence, we arrive at \eqref{asymptbeta}.
\end{proof}

\begin{remark}
Theorems~\ref{thm:asympt} and \ref{thm:w1} are valid for the eigenvalues and the weight numbers, respectively, of the boundary value problems for the system $\vec y' = (F(x) + \Lambda) \vec y$ with the appropriate boundary conditions generated by the linear forms $U_s(y)$, $s = \overline{0,n}$, and with an arbitrary matrix function $F(x) = [f_{k,j}]_{k,j=1}^n$ (not necessarily related to the differential expression $\ell_n(y)$) satisfying the conditions:
\begin{gather*}
    f_{k,j} = \de_{k+1,j}, \: k < j, \quad f_{k,k} \in L_2(0,1), \quad f_{k,j} \in L_1(0,1), \: k > j, \quad \mbox{trace}(F(x)) \equiv 0.
\end{gather*}
\end{remark}

Further, we formulate and prove the theorem for the weight numbers $\{ \beta_l \}$ similar to Theorem~\ref{thm:dif} for the eigenvalues.

\begin{thm} \label{thm:w2}
Suppose that $\sigma_{\nu}(x) = \tilde \sigma_{\nu}(x)$ for a.e. $x \in (0,1)$, $\nu = \overline{\nu_0, n-2}$, and $u_{s,p_s-j} = \tilde u_{s,p_s-j}$,$j = \overline{0,d-2}$, $s = \overline{0,n}$, where $d$ is defined by \eqref{defd1}.
Then
$$
\beta_l - \tilde \beta_l = l^{n - 1 + p_0 - p_r - d} (\hat c + \delta_l), \quad \delta_l = o(1), \quad l \to \iy, 
$$
where $\hat c$ and $\delta_l$ have the properties similar to the ones in Theorem~\ref{thm:dif}, where $s = \overline{1,n}$ in \eqref{valc} is replaced with $s = \overline{0,n}$.
\end{thm}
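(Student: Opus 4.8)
The plan is to mimic the proof of Theorem~\ref{thm:dif}, now applied to the quotient representation \eqref{beD1} of $\beta_l$ rather than to a single characteristic function. Take $(n-r)$ even for definiteness. Recall from \eqref{beD1} that, for sufficiently large $l$,
$$
\beta_l = n\,\rho_l^{\,n-1+p_0-p_r}\,\frac{D^{\bullet}_1(\rho_l)}{\dot D_1(\rho_l)}, \qquad
\tilde\beta_l = n\,\tilde\rho_l^{\,n-1+p_0-p_r}\,\frac{\tilde D^{\bullet}_1(\tilde\rho_l)}{\dot{\tilde D}_1(\tilde\rho_l)} .
$$
The first step is to derive the analog of \eqref{D1h} for the characteristic function containing $U_0$: using Corollary~\ref{cor:y}, the difference formula \eqref{difU} extended to the index $s=0$, and the hypothesis $u_{s,p_s-j}=\tilde u_{s,p_s-j}$ for $j=\overline{0,d-2}$, $s=\overline{0,n}$, one obtains
$$
\hat D^{\bullet}_1(\rho) = \rho^{-d}\bigl(\hat c^{\bullet}_1 - \hat c^{\bullet}_2\exp(\rho(\om_r-\om_{r+1})) + \hat\eps(\rho)\bigr), \qquad \rho\in\mathcal S_R,
$$
for sufficiently large $|\rho|$, where $\hat c^{\bullet}_1,\hat c^{\bullet}_2$ depend only on the quantities in \eqref{valc} with $s=\overline{0,n}$, and $|\hat\eps(\rho)|\le C(\Upsilon(\rho)+\Upsilon_d(\rho)+|\rho|^{-1})$.

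Next I would split the difference into a dominant and a subordinate part. Writing $G_l:=D^{\bullet}_1(\rho_l)/\dot D_1(\rho_l)$ and $\tilde G_l$ analogously,
$$
\hat\beta_l = n\,\rho_l^{\,n-1+p_0-p_r}\,(G_l-\tilde G_l) + n\,\tilde G_l\bigl(\rho_l^{\,n-1+p_0-p_r}-\tilde\rho_l^{\,n-1+p_0-p_r}\bigr).
$$
Since $\rho_l-\tilde\rho_l=l^{-d}(\hat c+\delta_l)$ by Theorem~\ref{thm:dif}, $\tilde G_l$ is bounded, and $\rho_l\sim\tfrac{\pi}{\sin(\pi r/n)}\,l$, the second summand is $O(l^{\,n-2+p_0-p_r-d})$, hence of lower order than claimed and absorbed into the remainder. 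For the first summand, the denominators satisfy $\dot D_1(\rho_l),\dot{\tilde D}_1(\tilde\rho_l)\to s_2\ne 0$ (the limit computed as in the proof of Theorem~\ref{thm:w1}), so it suffices to show that the numerator of $G_l-\tilde G_l$,
$$
D^{\bullet}_1(\rho_l)\bigl(\dot{\tilde D}_1(\tilde\rho_l)-\dot D_1(\rho_l)\bigr) + \dot D_1(\rho_l)\bigl(D^{\bullet}_1(\rho_l)-\tilde D^{\bullet}_1(\tilde\rho_l)\bigr),
$$
equals $l^{-d}(\text{const}+o(1))$. Both brackets are treated by the complex Taylor formula as in \eqref{sm2}:
$$
D^{\bullet}_1(\rho_l)-\tilde D^{\bullet}_1(\tilde\rho_l) = \dot D^{\bullet}_1(\rho_l)(\rho_l-\tilde\rho_l) + O(|\rho_l-\tilde\rho_l|^2) + \hat D^{\bullet}_1(\tilde\rho_l),
$$
and, likewise,
$$
\dot D_1(\rho_l)-\dot{\tilde D}_1(\tilde\rho_l) = \ddot D_1(\rho_l)(\rho_l-\tilde\rho_l) + O(|\rho_l-\tilde\rho_l|^2) + \dot{\hat D}_1(\tilde\rho_l).
$$
Here $\dot D^{\bullet}_1$ and $\ddot D_1$ are bounded along $\{\rho_l\}$, with limits computable from \eqref{relD1} and \eqref{relD1b}; $\hat D^{\bullet}_1(\tilde\rho_l)$ and $\dot{\hat D}_1(\tilde\rho_l)$ — the latter obtained by differentiating \eqref{D1h} and using a Cauchy-estimate bound $|\dot{\hat\eps}(\rho_l)|\le C(\Upsilon(\rho^{\diamond}_l)+\Upsilon_d(\rho^{\diamond}_l)+|\rho_l|^{-1})$ exactly as in the proof of Lemma~\ref{lem:Ddot} — are each of the form $l^{-d}(\text{const}+o(1))$; and $\rho_l-\tilde\rho_l=l^{-d}(\hat c+\delta_l)$ with $O(|\rho_l-\tilde\rho_l|^2)=O(l^{-2d})$ negligible. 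Collecting these estimates gives $\hat\beta_l = l^{\,n-1+p_0-p_r-d}(\hat c+\delta_l)$ with $\delta_l=o(1)$ and $\hat c$ depending only on the numbers \eqref{valc} with $s=\overline{0,n}$.

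Finally, the two refinements are inherited from the corresponding steps of Theorem~\ref{thm:dif}. If all the numbers in \eqref{valc} (with $s=\overline{0,n}$) vanish, then $\hat c^{\bullet}_1=\hat c^{\bullet}_2=0$ and all constants produced by the Taylor remainders vanish as well (by Corollary~\ref{cor:y}), hence $\hat c=0$. If $\hat\sigma_\nu\in L_2(0,1)$ for $\nu\in\mathcal N_d$, then $\hat A_d\in L_2(0,1)$, so Proposition~\ref{prop:nonc} applied to $\Upsilon(\rho)$ and to $\Upsilon_d(\rho)$ (the latter along the non-condensing shifted sequence $\{\rho^{\diamond}_l\}$ for the derivative terms) turns every $o(1)$ remainder above into an $l_2$-sequence, whence $\{\delta_l\}\in l_2$. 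The main obstacle I anticipate is the bookkeeping in the middle step: one must check that expanding the quotient and the Taylor remainders produces no unexpected contribution of order $l^{-d}$ or larger, and that the surviving constant depends only on the listed data — this is the weight-number analog of the delicate matching already carried out for the eigenvalues in Theorem~\ref{thm:dif}.
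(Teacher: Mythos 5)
Your argument is correct and follows essentially the same route as the paper: both start from the representation \eqref{beD1}, decompose the difference of the two quotients (the paper via the three-term splitting \eqref{dbe}, you via a common denominator — an algebraically equivalent bookkeeping), and then handle each piece with the Taylor expansion around $\rho_l$, the analog of \eqref{D1h}--\eqref{D1rho} for $\hat D_1^{\bullet}$, the estimate $\rho_l-\tilde\rho_l=l^{-d}(\hat c+\delta_l)$ from Theorem~\ref{thm:dif}, Cauchy estimates in the spirit of Lemma~\ref{lem:Ddot} for the derivative differences, and Proposition~\ref{prop:nonc} for the $l_2$ refinement. The only places where you are more explicit than the paper (the lower-order term $\rho_l^q-\tilde\rho_l^q$ and the term $\dot{\hat D}_1(\tilde\rho_l)$, which the paper dismisses with ``analogously'') are handled correctly, so no gap remains.
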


The notations $\hat c$ and $\de_l$ in Theorems~\ref{thm:dif} and~\ref{thm:w2} are used for different values. However, we use the same notations in the both cases to emphasize the similar remainder properties.

\begin{proof}[Proof of Theorem~\ref{thm:w2}]
Assume that the conditions of the theorem holds for $\mathcal L$ and $\tilde {\mathcal L}$.
Similarly to the previous proofs, we consider the case of even $(n - r)$. The odd case is analogous. Let us use the formula \eqref{beD1} for $\be_l$ and $\tilde \be_l$. For shortness, put $q := n-1+p_0-p_r$. Then
\begin{align} \nonumber
\frac{\beta_l - \tilde \beta_l}{n} & = 
\rho_l^q \frac{D^{\bullet}_1(\rho_l)}{\tfrac{d}{d\rho} D_1(\rho_l)} - \tilde \rho_l^q \frac{\tilde D^{\bullet}_1(\tilde \rho_l)}{\tfrac{d}{d\rho} \tilde D_1(\tilde \rho_l)} \\ \label{dbe} & =
\rho_l^q \frac{D^{\bullet}_1(\rho_l) - \tilde D^{\bullet}_1(\tilde \rho_l)}{\tfrac{d}{d\rho} D_1(\rho_l)} + \rho_l^q \tilde D^{\bullet}_1(\rho_l) \frac{\tfrac{d}{d\rho} \tilde D_1(\tilde \rho_l) - \tfrac{d}{d\rho} D_1(\rho_l)}{\tfrac{d}{d\rho} D_1(\rho_l) \tfrac{d}{d\rho} \tilde D_1(\tilde \rho_l)} + (\rho_l^q - \tilde \rho_l^q) \frac{D^{\bullet}_1(\rho_l)}{\tfrac{d}{d\rho} D_1(\rho_l)}
\end{align}
for sufficiently large $l$.

Consider the first term in \eqref{dbe}. Obviously,
$$
D_1^{\bullet}(\rho_l) - \tilde D_1^{\bullet}(\tilde \rho_l) = D_1^{\bullet}(\rho_l) - D_1^{\bullet}(\tilde \rho_l) + \hat D_1^{\bullet}(\tilde \rho_l).
$$
For $D_1^{\bullet}(\tilde \rho_l)$, the asymptotics similar to \eqref{D1rho} holds. Using the Taylor formula, we obtain
\begin{gather*}
D_1^{\bullet}(\tilde \rho_l) - D_1^{\bullet}(\rho_l) = \dot D_1^{\bullet}(\rho_l) (\tilde \rho_l - \rho_l) + \mathcal R^{\bullet}(\rho_l, \tilde \rho_l), \\ |\mathcal R^{\bullet}(\rho_l, \tilde \rho_l)| \le C |\rho_l - \tilde \rho_l|^2.
\end{gather*}
Thus, similarly to the proof of Theorem~\ref{thm:dif}, we obtain
$$
D_1^{\bullet}(\rho_l) - \tilde D_1^{\bullet}(\tilde \rho_l) = l^{-d} (\hat c + \delta_l), \quad \delta_l = o(1), \quad l \to \iy.
$$
Using \eqref{smrhol} and Lemma~\ref{lem:Ddot}, we arrive at the asymptotics $l^{q-d} (\hat c + \delta_l)$ for the first term of \eqref{dbe}. For the second and the third terms, the same asymptotics can be obtained analogously. This completes the proof of Theorem~\ref{thm:w2}.
\end{proof}

\medskip

{\bf Funding.} This work was supported by Grant 21-71-10001 of the Russian Science Foundation, https://rscf.ru/en/project/21-71-10001/.

\medskip

\noindent Natalia Pavlovna Bondarenko \\
1. Department of Applied Mathematics and Physics, Samara National Research University, \\
Moskovskoye Shosse 34, Samara 443086, Russia, \\
2. Department of Mechanics and Mathematics, Saratov State University, \\
Astrakhanskaya 83, Saratov 410012, Russia, \\
e-mail: {\it BondarenkoNP@info.sgu.ru}

\end{document}